\theoremstyle{plain}
\newtheorem{gessel}{Theorem}[section]
\newtheorem{pathmatrix}[gessel]{Lemma}
\newtheorem{launois1}[gessel]{Theorem}
\newtheorem{launois2}[gessel]{Theorem}
\newtheorem{edgecommutivity}[gessel]{Lemma}
\newtheorem{disjointpath}[gessel]{Lemma}
\newtheorem{splitpath}[gessel]{Lemma}
\newtheorem{cauchongraphproperties}[gessel]{Proposition}
\newtheorem{maincorollary}[gessel]{Theorem}
\newtheorem{maintheorem}[gessel]{Theorem}
\newtheorem{goodearlletzter}[gessel]{Theorem}
\newtheorem{consecutivepaths}[gessel]{Lemma}
\theoremstyle{definition}
\newtheorem{cauchondiagram}[gessel]{Definition}
\newtheorem{cauchongraph}[gessel]{Definition}
\newtheorem{quantumalgebra}[gessel]{Definition}
\newtheorem{pathsystem}[gessel]{Definition}
\newtheorem{pathmatrixdef}[gessel]{Definition}
\newtheorem{algorithm}[gessel]{Algorithm}
\newtheorem{graphnotation}[gessel]{Notation}
\newtheorem{basicnotation}[gessel]{Notation}
\newtheorem{Talgebra}[gessel]{Definition}
\newtheorem{qtrans}[gessel]{Remark}
\newtheorem{example}[gessel]{Example}
\newtheorem*{claim}{Claim}
\title{A Graph Theoretic Method for Determining Generating Sets of Prime Ideals in Quantum Matrices}
\author{Karel Casteels\footnote{Author supported by the National Sciences and Engineering Research Council of Canada.}  \\
Department of Mathematics \\
Simon Fraser University \\
8888 University Dr. \\
Vancouver BC, Canada\\
V5A 1S6}
\begin{document}
\maketitle
\date

\begin{abstract}
We take a graph theoretic approach to the problem of finding generators for those prime ideals of $\mathcal{O}_q(\mathcal{M}_{m,n}(\mathbb{K}))$ which are invariant under the torus action ($\mathbb{K}^*)^{m+n}$. Launois \cite{launois3} has shown that the generators consist of certain quantum minors of the matrix of canonical generators of $\mathcal{O}_q(\mathcal{M}_{m,n}(\mathbb{K}))$ and in \cite{launois2} gives an algorithm to find them. In this paper we modify a classic result of Lindstr\"{o}m \cite{lind} and Gessel-Viennot~\cite{gv} to show that a quantum minor is in the generating set for a particular ideal if and only if we can find a particular set of vertex-disjoint directed paths in an associated directed graph. 
\end{abstract}

\section{Introduction}
Let $\mathbb{K}$ be a field of characteristic zero. Let $\mathcal{A}=\mathcal{O}_q(\mathcal{M}_{m,n}(\mathbb{K}))$ be the quantized coordinate ring of $m\times n$ matrices over $\mathbb{K}$ (informally referred to as the algebra of $m\times n$ quantum matrices).

Recent attention has focused on understanding the prime spectrum of $\mathcal{A}$. Goodearl and Letzter \cite{gl} have developed a powerful stratification theory that allows one to restrict attention to those prime ideals that are held stable under the action of an algebraic torus $\mathcal{H}=(\mathbb{K}^*)^{m+n}$. Call the set of such ideals $\mathcal{H}$-spec($\mathcal{A}$).

Cauchon \cite{cauchon1} applied his deleting derivations algorithm to $\mathcal{A}$ and obtained a bijection between $\mathcal{H}$-spec($\mathcal{A}$) and a set of combinatorial objects which have come to be called \emph{Cauchon diagrams}. The notion of a Cauchon diagram has recently been extended by M\'eriaux~\cite{mer2} to other classes of quantum algebras, however, in the context of $m\times n$ quantum matrices, a Cauchon diagram consists of an $m\times n$ grid of squares, each square coloured black or white so that for any black square, either every square above it or every square to its left is also black. An example appears in Figure~\ref{example}. 

Launois \cite{launois3} further developed these ideas and was able to prove a conjecture of Goodearl and Lenagan \cite{gl4} that the ideals of $\mathcal{H}$-spec($\mathcal{A})$ are generated by quantum minors of the canonical matrix $X_\mathcal{A}$ of generators of $\mathcal{A}$ (see Definition~\ref{quantumalgebra}). Furthermore, Launois \cite{launois2} gave an algorithm that explicitly determines the quantum minors in question. The first step of this algorithm is to calculate a certain matrix $T$ with entries in a McConnell-Pettit algebra. The next step is to determine the vanishing minors of this matrix. 

The main contribution of this paper is Theorem~\ref{main} which essentially states that Launois' algorithm is equivalent to finding certain sets of disjoint paths in a Cauchon diagram. Roughly speaking, we show that a $k\times k$ submatrix of $T$ has non-vanishing quantum determinant if and only if we can find a corresponding set of $k$ disjoint paths in the Cauchon diagram. For example, Figure~\ref{example} gives an example of a $4\times 5$ Cauchon diagram with two paths drawn over top; the existence of these paths implies that the quantum minor of $T$ indexed by $\{1,2\}\times\{1,2\}$ is non-zero, so that the corresponding quantum minor of $X_\mathcal{A}$ does \emph{not} belong to the $\mathcal{H}$-prime ideal associated to this Cauchon diagram. This method was inspired by an old result of Lindstr\"{o}m \cite{lind} (often attributed to Gessel-Viennot \cite{gv}). We will call this result Lindstr\"om's Lemma (see \cite{proofsbook} for an excellent exposition). A major component of our work (Theorem~\ref{gv}) is the proof of a $q$-analogue of a special case of Lindstr\"om's Lemma.
\begin{figure}
\centering
\includegraphics[height=4.5cm]{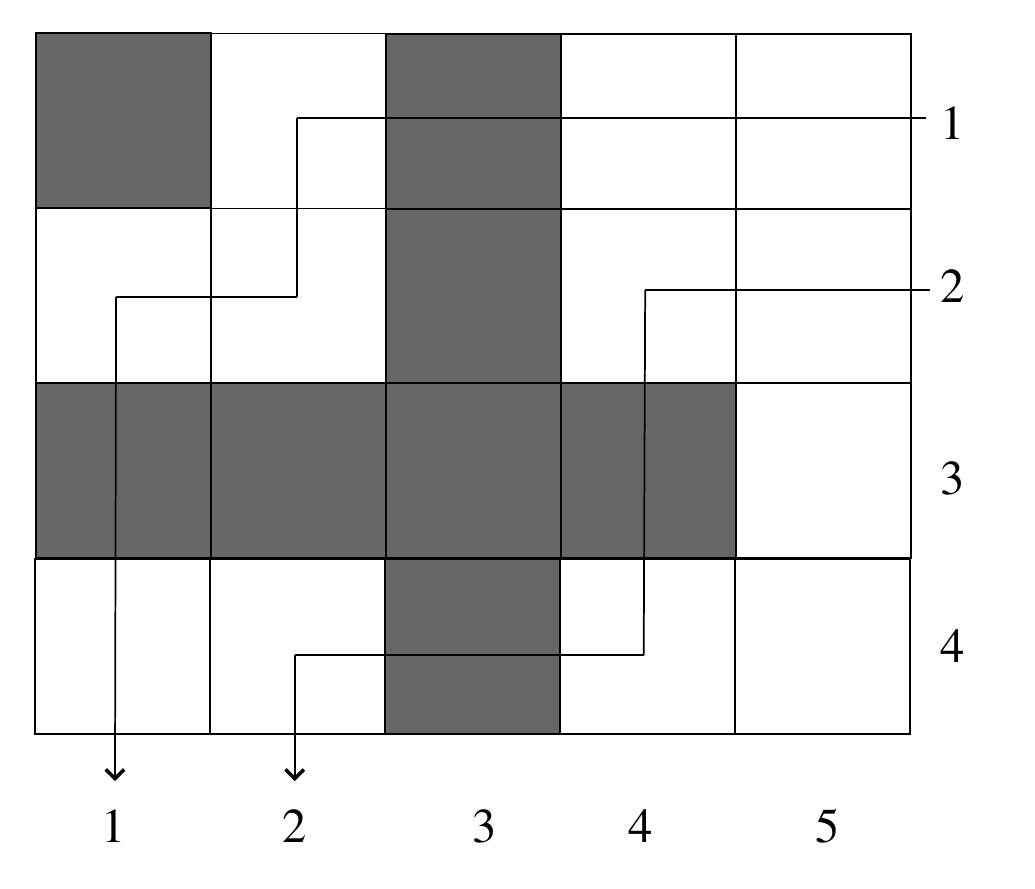}
\caption{A Cauchon diagram together with two disjoint paths from rows $1$ and $2$ to columns $1$ and $2$.} \label{example}
\end{figure}

Cauchon diagrams essentially appeared independently in the work of Postnikov~\cite{postnikov} on totally nonnegative Grassmannians where they are there called \reflectbox{L}-\emph{diagrams} (also sometimes written as \emph{Le-diagrams}). His work implies a correspondence between Cauchon diagrams and the collection of totally non-negative matrices over $\mathbb{R}$ (that is, matrices all of whose minors are non-negative). The connection between Postnikov's work and the ideal structure of $\mathcal{A}$ has recently been developed by Goodearl, Launois and Lenagan~\cite{gll2, gll}. In view of this and the results of this paper, it is perhaps not surprising that Talaska~\cite{tal} has independently been able to give an explicit description of the correspondence between Postnikov's \reflectbox{L}-diagram and totally-nonnegative-matrices using the classic version of Lindstr\"om's Lemma. 

Finally, we note that by a result of Launois~\cite{launois1} (see also~\cite{gll2,gll}), the poset structure of $\mathcal{H}$-spec($\mathcal{A}$) is order-isomorphic to a subset of $(n+m)$-permutations ordered with the Bruhat order. Yakimov~\cite{yakimov} has recently developed an alternate approach to determine generators for $\mathcal{H}$-primes based on such permutations. These sets also consist of quantum minors but, in contrast to our method, do not necessarily include every quantum minor in the given $\mathcal{H}$-prime.

\section{Background}
\subsection{Basic Definitions}
In this section we give the definitions which are of relevance to us and outline some of the basic past results in this area of study.

\begin{basicnotation} \label{basicnotation}
Below is a list of notation and conventions which will be used in this work.
\begin{itemize}
\item If $n$ is a positive integer, then  $[n]:=\{1,2,\ldots,n\}$.
\item As we will be working in a noncommutative algebra, we clarify that the standard product notation of a set of elements, say $\prod_{i=1}^n z_i$ is the ordered product, from \emph{left to right} of the $z_i$, i.e.,
$$\prod_{i=1}^n z_i = z_1z_2\cdots z_n.$$
\item If $\sigma$ is a permutation acting on a finite set of integers $X$, then the \emph{length} $\ell(\sigma)$ denotes the total number of pairs $i,j\in X$ such that $i<j$ and $\sigma(i)>\sigma(j)$. Such a pair is called an \emph{inversion}. 
\item If $M$ is an $m\times n$ matrix with $I\subseteq [m]$ and $J\subseteq[n]$, denote by $M[I,J]$ the submatrix of $M$ whose rows are indexed by $I$ and columns indexed by $J$. 
\item The \textit{quantum determinant}, or \textit{$q$-determinant} of a $k\times k$ matrix $M$ with respect to $q\in\mathbb{K}$ is the quantity $${\det}_q(M):= \sum_{\sigma\in S_k} (-q)^{l(\sigma)}\prod_{i=1}^k M[i,\sigma(i)].$$
\item We will use the standard partial ordering on pairs of integers by setting $(i,j)\leq (s,t)$ if and only if $i\leq s$ and $j\leq t$.
\item We also totally order the set $([m]\times [n])\cup (m,n+1)$ by setting $(i,j)\preceq (s,t)$ if and only if either $i<s$ or $i=s$ and $j \leq t$. This is called the \textit{lexicographic} ordering. 
\item Let $(i,j)\in ([m]\times [n])\cup(m,n+1)$. If $(i,j)\neq (1,1)$, we denote by $(i,j)^-$ the greatest element less than $(i,j)$ in the lexicographic ordering. If $(i,j)\neq (m,n+1)$, we denote by $(i,j)^+$ the least element which is greater than $(i,j)$ in this ordering.

\end{itemize}
\end{basicnotation}

We assume the reader is familiar with the elementary definitions of graphs and directed graphs but we refer the uninitiated reader to Wilson~\cite{wilson} for these details.

The algebraic structure of interest to us is defined as follows.

\begin{quantumalgebra} \label{quantumalgebra}
Fix $q\in\mathbb{K}^*$ and two positive integers $m$ and $n$. The $\mathbb{K}$-algebra $\mathcal{A}=\mathcal{O}_q(\mathcal{M}_{m,n}(\mathbb{K}))$ is the quantized coordinate ring of $m\times n$ matrices over $\mathbb{K}$ (informally referred to as ``$m\times n$ quantum matrices''). 
In other words, $\mathcal{A}$ is the $\mathbb{K}$-algebra generated by the $mn$ indeterminants $x_{i,j}$ (the \emph{canonical generators}) which satisfy the following relations. 
Consider the $m\times n$ matrix $X_\mathcal{A}$ where $X_\mathcal{A}[i,j] = x_{i,j}$. If $m\geq 2$ and $n\geq 2$, then for any $2\times 2$ submatrix
$\left[\begin{array}{cc}
a & b \\
c & d
\end{array}\right]$
 of $X_\mathcal{A}$, the following hold:
\begin{enumerate}
	\item $ab= q\,ba$ and $cd= q\,dc$,
	\item $ac= q\,ca$ and $bd= q\,db$,
	\item $bc=cb$,
	\item $ad - da = (q-q^{-1})\,bc$. \label{qa4}
\end{enumerate}
If $m=1$, then for $j>k$, we set $x_{1,j}x_{1,k}=q\,x_{1,k}x_{1,j}$. If $n=1$, then for $i>\ell$, we set $x_{i,1}x_{\ell,1}=q\,x_{\ell,1}x_{i,1}$. 
\end{quantumalgebra}

\begin{qtrans}
In this work we always fix $q\in\mathbb{K}^*$ to be transcendental over $\mathbb{Q}$. 
\end{qtrans}

It is known that $\mathcal{A}$ can be presented as an iterated Ore extension and hence is a Noetherian domain. Furthermore, $\mathcal{A}$ is a domain of finite GK dimension. We may therefore conclude by Proposition 4.13 in \cite{kl} the existence of the quotient division algebra ${\rm Frac}(\mathcal{A})$. Finally, since we have assumed that $q$ is, in particular, not a root of unity, every prime ideal is completely prime (see \cite{gl3}).  Denote by spec($\mathcal{A})$ the set of prime ideals in $\mathcal{A}$. We equip spec($\mathcal{A})$ with the Zariski topology. 

Although our results are to be applied to $\mathcal{A}$, our calculations will mostly be performed in the following algebra $\mathcal{B}$.

\begin{Talgebra} \label{talg}
For positive integers $m$ and $n$, the $\mathbb{K}$-algebra $\mathcal{A}^\prime$ is the $m\times n$ quantum affine space. It is the algebra generated by $\langle t_{i,j}\mid (i,j)\in [m]\times[n]\rangle$ where commutation amongst the \emph{canonical generators} $t_{i,j}$ is defined as follows. Consider the $m\times n$ matrix $T_{\mathcal{A}^\prime}$ with $T_{\mathcal{A}^\prime}[i,j] = t_{i,j}$. If $m\geq 2$ and $n\geq 2$, then for any $2\times 2$ submatrix
$\left[\begin{array}{cc}
a & b \\
c & d
\end{array}\right]$
 of $T_{\mathcal{A}^\prime}$, the following hold:
\begin{enumerate}
	\item $ab= q\,ba$ and $cd= q\,dc$,
	\item $ac= q\,ca$ and $bd= q\,db$,
	\item $bc=cb$,
	\item $ad=da$.
\end{enumerate}
If $m=1$, then for $j>k$, we set $t_{1,j}t_{1,k}=q\,t_{1,k}t_{1,j}$. If $n=1$, then for $i>\ell$, we set $t_{i,1}t_{\ell,1}=q\,t_{\ell,1}t_{i,1}$. 

We denote by $\mathcal{B}$ the McConnell-Pettit algebra associated to the quantum affine space $\mathcal{A}^\prime$. It is the localization of $\mathcal{A}^\prime$ at the regular normal elements $t_{i,j}$ for $(i,j)\in [m]\times[n]$. Finally, we set $T_\mathcal{B}=T_{\mathcal{A}^\prime}$.
\end{Talgebra}

Note that by work of Cauchon (Th\'eor\`eme 2.2.1 of~\cite{cauchon2}) $\mathcal{A}$ has a localization isomorphic to $\mathcal{B}$.
 
Consider the following action of $\mathcal{H}=(\mathbb{K}^*)^{m+n}$ by $\mathbb{K}$-automorphisms on $\mathcal{A}$. The element $h=(\rho_1,\ldots,\rho_m,\gamma_1,\ldots,\gamma_n)\in(\mathbb{C^*})^{m+n}$ acts on a canonical generator  $x_{i,j}$ by, $$h\cdot x_{i.j} := \rho_i\gamma_jx_{i,j}.$$ 

An ideal $I$ is \textit{$\mathcal{H}$-invariant} if $h\cdot I = I$ for all $h\in \mathcal{H}$. The set of all $\mathcal{H}$-invariant prime ideals is denoted by $\mathcal{H}$-spec($\mathcal{A}$). The results of Goodearl and Letzter \cite{gl} imply the following.

\begin{goodearlletzter}[$\mathcal{H}$-stratification of spec($\mathcal{A}$)]
Let $\mathcal{A}$ be the algebra of $m\times n$ quantum matrices. The following hold.
\begin{enumerate}
\item $\mathcal{H}$-spec($\mathcal{A}$) is a finite set.
\item
The set {\rm spec}($\mathcal{A}$) can be partitioned (or \emph{stratified}) into a disjoint union as follows.

$$\emph{spec}(\mathcal{A}) = \bigcup_{J\in \mathcal{H}\text{\emph{-spec}}(\mathcal{A})} Y_J,$$

where $\displaystyle{Y_j:=\{P\in\text{\emph{spec}}(\mathcal{A}) \mid \bigcap_{h\in\mathcal{H}} h\cdot P = J\}}$.
\item Each stratum $Y_J$ is homeomorphic to the prime spectrum of a commutative Laurent polynomial ring over $\mathbb{K}$.
\end{enumerate}
\end{goodearlletzter}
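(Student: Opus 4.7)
The statement packages the Goodearl--Letzter $\mathcal{H}$-stratification theory as applied to our specific algebra $\mathcal{A}$, so the plan is to verify that $\mathcal{A}$ satisfies the hypotheses of the general theorem in \cite{gl} and then invoke each of its three conclusions. Concretely, I would first arrange the canonical generators $x_{i,j}$ in the lexicographic order from Notation \ref{basicnotation} and check that the resulting presentation
\[
\mathcal{A} \;=\; \mathbb{K}[x_{1,1}][x_{1,2};\sigma_{1,2},\delta_{1,2}]\cdots [x_{m,n};\sigma_{m,n},\delta_{m,n}]
\]
realizes $\mathcal{A}$ as an iterated Ore extension in which every pair $(\sigma_{i,j},\delta_{i,j})$ meets the hypotheses required of a CGL extension: $\sigma_{i,j}$ is an $\mathcal{H}$-eigenautomorphism whose eigenvalue on each previously adjoined generator is an integer power of $q$, $\delta_{i,j}$ is a locally nilpotent $q$-skew derivation, and the $\mathcal{H}$-eigenvalue of $x_{i,j}$ itself is not a root of unity. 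All of this reduces to reading off the diagonal action $h\cdot x_{i,j}=\rho_i\gamma_j x_{i,j}$ against the four quadratic relations of Definition \ref{quantumalgebra}; since $q$ is transcendental (and in particular not a root of unity), the non-root-of-unity requirement holds automatically.

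With the CGL framework in place, part (1) follows from the general principle that in a CGL extension the set of $\mathcal{H}$-primes is in bijection with a subset of the power set of the indexing set of generators, hence is finite; in our situation this indexing set will eventually be reinterpreted as the set of Cauchon diagrams. For part (2), I would observe that for any $P\in \mathrm{spec}(\mathcal{A})$ the intersection $J(P):=\bigcap_{h\in\mathcal{H}} h\cdot P$ is manifestly $\mathcal{H}$-invariant and is prime because $q$ is not a root of unity, so every prime of $\mathcal{A}$ is completely prime (as noted after Definition \ref{quantumalgebra}). The partition of $\mathrm{spec}(\mathcal{A})$ is then exactly the fibration $P\mapsto J(P)$.

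The heart of the argument, and the step I expect to be the main obstacle, is the homeomorphism in part (3). Here one fixes $J\in \mathcal{H}\text{-spec}(\mathcal{A})$ and studies the prime quotient $\mathcal{A}/J$. The Goodearl--Letzter strategy is to localize $\mathcal{A}/J$ at the multiplicative set generated by the non-zero $\mathcal{H}$-eigenvectors, producing an algebra $R_J$ in which every non-zero $\mathcal{H}$-eigenvector is a unit. One then shows that $R_J$ is $\mathcal{H}$-simple and that the subalgebra $Z$ of $\mathcal{H}$-invariants of the centre of $R_J$ is a commutative Laurent polynomial ring $\mathbb{K}[z_1^{\pm 1},\ldots, z_r^{\pm 1}]$; extension and contraction of ideals then give the required homeomorphism $Y_J \cong \mathrm{spec}(Z)$. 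The technical content lies in establishing $\mathcal{H}$-simplicity and pinning down the rank $r$, both of which depend on the iterated Ore structure from the first paragraph together with a careful analysis of the character lattice generated by the $\mathcal{H}$-eigenvalues of the $x_{i,j}$; here relation (\ref{qa4}) of Definition \ref{quantumalgebra} is where the specific geometry of quantum matrices, as opposed to a generic quantum affine space, genuinely enters.
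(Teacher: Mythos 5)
The paper does not prove this statement at all: it is imported verbatim from Goodearl--Letzter \cite{gl} (the text says only ``The results of Goodearl and Letzter \cite{gl} imply the following''), so there is no internal argument to compare against. Your proposal is the standard route from the literature --- exhibit $\mathcal{A}$ as a CGL extension in the lexicographic order of the $x_{i,j}$ and invoke the general stratification theorem --- and as an outline it is the right one. Be aware, though, that as written it is a plan rather than a proof: parts (1) and (3) are exactly the content of the general theorem you are citing, so nothing is actually established beyond the (routine but necessary) verification of the CGL hypotheses.

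Two specific points need repair. First, in part (2) you justify the primeness of $J(P)=\bigcap_{h\in\mathcal{H}}h\cdot P$ by appealing to complete primeness of primes of $\mathcal{A}$. That is not the relevant fact: for an arbitrary group action $(P:\mathcal{H})$ is only an $\mathcal{H}$-prime ideal, and the theorem that $\mathcal{H}$-primes are genuinely prime is a Goodearl--Letzter result resting on the \emph{rationality} of the action of the torus $(\mathbb{K}^*)^{m+n}$ on the noetherian algebra $\mathcal{A}$, not on $q$ being a non-root of unity. Second, in part (3) you assert that the subalgebra of $\mathcal{H}$-invariants of the centre of $R_J$ is the Laurent polynomial ring. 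In the stratification theorem it is the \emph{full} centre $Z(R_J)$ that is a commutative Laurent polynomial ring, over the fixed field $Z(R_J)^{\mathcal{H}}$ (which for a CGL extension is just $\mathbb{K}$); the homeomorphism $Y_J\cong\operatorname{spec} Z(R_J)$ is given by localization and contraction. As stated, your $Z$ would be only the coefficient field and its spectrum a point, which would collapse every stratum. With these two corrections the outline matches the standard proof.
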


\subsection{Cauchon Diagrams}

We begin this section by briefly describing Cauchon's deleting derivations algorithm \cite{cauchon2} applied to $\mathcal{A}$. The algorithm constructs, for each $(s,t)\in ([m]\times [n])\cup (m,n+1)$, a matrix $X^{(s,t)}$ with entries in the quotient division ring ${\rm Frac}(\mathcal{A})$ as follows. 

First set $X^{(m,n+1)}:=X_\mathcal{A}$. Now assume that $X^{(s,t)^+}$ has been constructed. If $(i,j) \not\leq (s-1,t-1)$, then $X^{(s,t)}[i,j]=X^{(s,t)^+}[i,j]$. For each $(i,j)\leq(s-1,t-1)$, we consider the $2\times 2$ submatrix $$X^{(s,t)^+}[\{i,s\},\{j,t\}] = 
\left[\begin{array}{cc}
X^{(s,t)^+}[i,j]	& X^{(s,t)^+}[i,t] \\
X^{(s,t)^+}[s,j] &  X^{(s,t)^+}[s,t]
\end{array}\right] :=
\left[\begin{array}{cc}
x	& y \\
z & w
\end{array}\right].$$
From this we set $$X^{(s,t)}[i,j] := x - yw^{-1}z\in {\rm Frac}(\mathcal{A}).$$

Let $\mathcal{A}^{(s,t)}\subseteq {\rm Frac}(\mathcal{A})$ denote the algebra whose matrix of canonical generators is $X^{(s,t)}$. Cauchon \cite{cauchon1} has shown that there exists an embedding $\text{spec}(\mathcal{A}^{(s,t)^+}) \hookrightarrow \text{spec}(\mathcal{A}^{(s,t)})$. Furthermore, the final algebra $\mathcal{A}^{(1,1)}$ is isomorphic to the quantum affine space $\mathcal{A}^\prime$. The composition of all such embeddings gives an embedding $\psi:\text{spec}(\mathcal{A})\hookrightarrow \text{spec}(\mathcal{A}^\prime)$. Finally, Cauchon has shown that the image of $\mathcal{H}$-spec($\mathcal{A}$) under $\psi$ is parametrized by a useful set of combinatorial objects, which we call \emph{Cauchon diagrams}.

\begin{cauchondiagram} \label{cauchondiagram}
Let $\mathcal{C}$ be an $m\times n$ grid of squares where we have coloured each square either white or black. Call $\mathcal{C}$ a \emph{Cauchon diagram} if, for every black square, either every square above it or every square to its left is also black.
We let $W_\mathcal{C}$ be the set of white squares and $B_\mathcal{C}$ to be the set of black squares. We index the squares of an $m\times n$ Cauchon diagram as one would a matrix. That is, the square in the $i$th row from the top and $j$th column from the left is called the $(i,j)$ square. 

 \end{cauchondiagram}
We have already seen an example of a Cauchon diagram in Figure~\ref{example}. Cauchon~\cite{cauchon2} proved that for every Cauchon diagram $\mathcal{C}$ there is a unique $\mathcal{H}$-prime $J_\mathcal{C}$ of $\mathcal{A}$ such that $\psi(J_\mathcal{C})=\left\langle t_{i,j}\mid (i,j)\in B_\mathcal{C}\right\rangle $ and that $$\mathcal{H}{\rm -spec}(\mathcal{A})=\{J_\mathcal{C}\mid \textnormal{ $\mathcal{C}$ an $m\times n$ Cauchon diagram}\}.$$

\section{Cauchon Graphs}
We essentially follow Postnikov \cite{postnikov} by defining a weighted directed graph given a Cauchon diagram $\mathcal{C}$. If $(i,j)$ is a white square in $\mathcal{C}$, then let $(i,j^-)$ be the first white square to its left (if one exists) and $(i,j^+)$ the first white square to its right (if one exists). Similarly, let $(i^+,j)$ be the first white square below $(i,j)$ (if one exists). 

\begin{cauchongraph} \label{cauchongraph}
Let $\mathcal{C}$ be an $m\times n$ Cauchon diagram.
The \textit{Cauchon graph} $\mathcal{G}_\mathcal{C}=(V,\vec{E},w)$ is an edge-weighted directed graph defined as follows. The vertices consist of the set $V=W_\mathcal{C}\cup\{r_1,\ldots,r_m\}\cup\{c_1,\ldots,c_n\}:=W_\mathcal{C}\cup R\cup C$. The set $\vec{E}$ of directed edges and a weight function $w:\vec{E}(\mathcal{G}_\mathcal{C})\rightarrow \mathcal{B}$ are constructed as follows. 
\begin{enumerate}
	\item For every $i\in [m]$, put a directed edge from $r_i$ to the rightmost white square in row $i$ (if it exists), say $(i,k)$. Give these edges weight $t_{i,k}$. 
	\item For every column $j\in [n]$, put a directed edge from the bottom-most white square in column $j$ (if it exists) to the vertex $c_j$. Give these edges weight 1.
	\item For every $(i,j)\in W_\mathcal{C}$, put a directed edge from $(i,j)$ to $(i,j^-)$ (if it exists). Give these edges a weight $t^{-1}_{i,j}t_{i,j^-}$.
	\item For every $(i,j)\in W_\mathcal{C}$, put a directed edge from $(i,j)$ to $(i^+,j)$ (if it exists). Give each of these edges a weight of 1.
\end{enumerate}
\end{cauchongraph}
For convenience, we always assume a Cauchon graph is embedded in the plane in the following way. First, place a vertex in each white square of $\mathcal{C}$ and label this vertex by the coordinates of the white square. Next, place a vertex to the right of each row and below each column. The vertex to the right of row $i$ is labelled $r_i$, and the vertex below column $j$ is $c_j$. See Figure~\ref{examplecg}. Under this embedding we may unambiguously use directional terms such as horizontal, vertical, above, below, left and right when discussing a Cauchon graph.

\begin{figure}
\centering
\includegraphics[height=6cm]{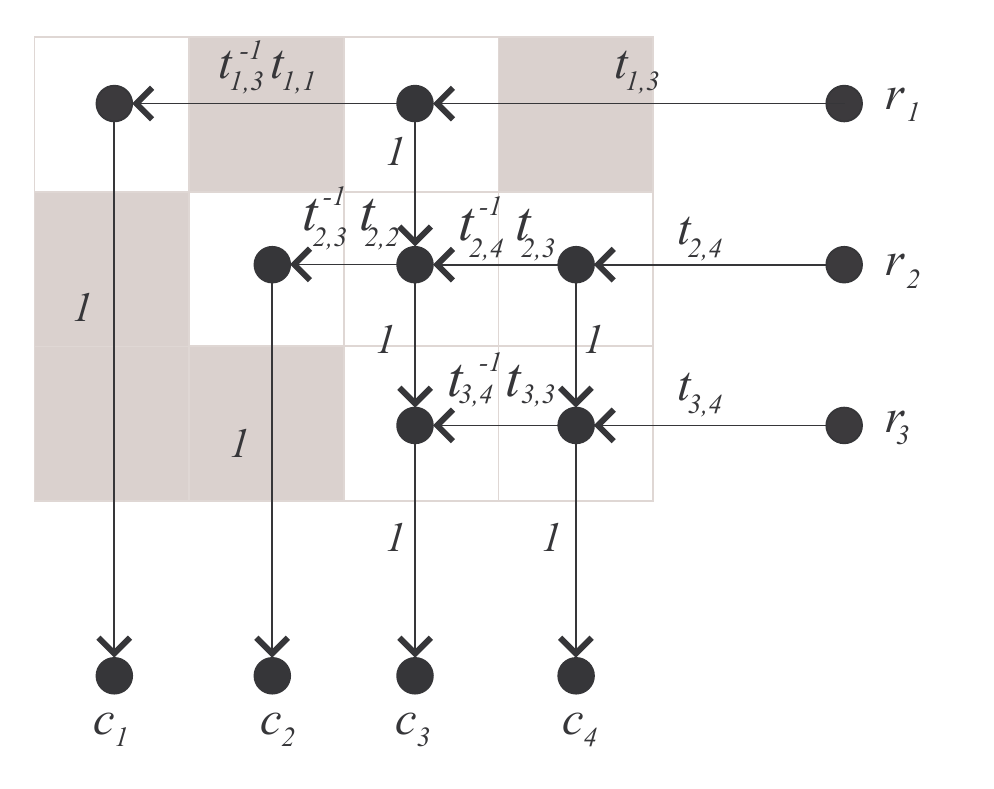}
\caption{A Cauchon graph superimposed on top of its Cauchon diagram.}\label{examplecg}
\end{figure}

\begin{graphnotation}
Consider the Cauchon graph for some fixed $m\times n$ Cauchon diagram.
\begin{itemize}
	\item For $I\subseteq [m]$ and $J\subseteq [n]$, we set $R_I=\{r_i\in R\mid i\in I\}$ and $C_J=\{c_j\in C\mid j\in J\}$.
	\item Let $e=\left((i,k),(i,j)\right)$ be a horizontal edge with both endpoints in $W_\mathcal{C}$ (so $k>j$). We set $\text{row}(e)= i$, $\text{col}_1(e)=k$ and $\text{col}_2(e)=j$. The pair $\{k,j\}$ will be denoted by $\text{col}(e)$. In other words, $\text{col}_1(e)$ is the column containing the right end (or tail) of $e$ and $\text{col}_2(e)$ is the column containing the left end (or head) of $e$.
	\item All paths in this paper are assumed to be \textit{directed} paths. When we wish to emphasize the first vertex $v_0$ and final vertex $v_n$ of a directed path $P$ we will write $P:v_0\Rightarrow v_n$.
	\item The weight of a path $P=(v_0,e_1,v_1,e_2,\ldots,e_n,v_n)$ is the product of the weights of its edges, multiplied from left to right in the order they appear in $P$. In other words, $$w(P)=w(e_1)w(e_2)\cdots w(e_n) = \prod_{i=1}^n w(e_i).$$ 
	\item If $K:v_0\Rightarrow v$ and $L:v\Rightarrow v_n$ are two paths in a Cauchon graph, then  $KL=KL: v_0\Rightarrow v_n$ is the path obtained by appending $L$ to $K$ in the obvious way. That is, $KL$ is the path which travels along $K$ from $v_0$ to $v$, and then continues on $L$ from $v$ to $v_n$. Note that $KL$ is still a directed path since, by the next proposition, Cauchon graphs are acyclic.
\end{itemize}
\end{graphnotation}

\begin{cauchongraphproperties} \label{cgproperties}
For a Cauchon diagram $\mathcal{C}$, the Cauchon graph $\mathcal{G}_\mathcal{C}$ has the following properties:
\begin{enumerate}
\item The graph $\mathcal{G}_\mathcal{C}$ is acyclic, i.e., it has no \textit{directed} cycles.
\item The embedding described above is a planar embedding, i.e., no edges touch except at a vertex.
\item If the path $P:(i,j_2)\Rightarrow(i,j_1)$ consists only of horizontal edges, then $$w(P)=t_{i,j_2}^{-1}t_{i,j_1}.$$
\end{enumerate}
\end{cauchongraphproperties}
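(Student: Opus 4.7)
The three parts are essentially independent, so I would handle them in turn, in increasing order of difficulty.

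For part (1), I would exhibit an explicit monovariant on the vertex set. The vertices $r_i$ have no incoming edges and the $c_j$ have no outgoing edges, so neither can appear in a directed cycle. Any cycle therefore lies entirely in $W_\mathcal{C}$. But every edge inside $W_\mathcal{C}$ is either horizontal from $(i,j)$ to $(i,j^-)$ (column index strictly decreases, row fixed) or vertical from $(i,j)$ to $(i^+,j)$ (row index strictly increases, column fixed). Thus the quantity $i - j$ is a strict monovariant along any walk through $W_\mathcal{C}$, ruling out cycles.

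For part (2), the horizontal edges in a given row are the consecutive links of a single left-going path through the white squares of that row, and similarly for columns, so two horizontal (resp.\ vertical) edges can share only an endpoint. The only remaining possibility is a crossing between a horizontal edge $e_h$ in row $i$ joining $(i,a)$ and $(i,b)$ with $b<a$, and a vertical edge $e_v$ in column $j$ joining $(s,j)$ and $(t,j)$ with $s<t$, where $b<j<a$ and $s<i<t$. By the definition of $j^-$ and $i^+$ the square $(i,j)$ lies strictly between consecutive white squares in both its row and its column, so $(i,j)$ is black, while $(s,j)$ and $(i,b)$ are white. The Cauchon condition applied to the black square $(i,j)$ says that either every square above it is black, forcing $(s,j)$ to be black, or every square to its left is black, forcing $(i,b)$ to be black; either way we contradict whiteness, so no crossing exists.

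For part (3) I would induct on the number of edges in the horizontal path $P:(i,j_2)\Rightarrow(i,j_1)$. The base case of a single edge is just the weight definition, $w(P)=t_{i,j_2}^{-1}t_{i,j_2^-}=t_{i,j_2}^{-1}t_{i,j_1}$. For the inductive step, write $P = e \cdot P'$ where $e$ is the first horizontal edge $((i,j_2),(i,k))$ with $k=j_2^-$ and $P':(i,k)\Rightarrow(i,j_1)$ is the remaining horizontal subpath. Using the definition of path weight (left-to-right product) and the induction hypothesis,
\[
w(P) \;=\; w(e)\,w(P') \;=\; \bigl(t_{i,j_2}^{-1}t_{i,k}\bigr)\bigl(t_{i,k}^{-1}t_{i,j_1}\bigr) \;=\; t_{i,j_2}^{-1}t_{i,j_1},
\]
since $t_{i,k}t_{i,k}^{-1}=1$ inside $\mathcal{B}$; no $q$-commutation is needed because the cancellation happens between adjacent inverse factors.

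The main obstacle, as I see it, is the planarity argument in part (2): it is the only step that genuinely uses the defining Cauchon property of the diagram, and writing it carefully requires being precise about why a square squeezed strictly between two consecutive white squares in both its row and its column must itself be black. Parts (1) and (3) are purely formal once one notes the directional convention on horizontal and vertical edges.
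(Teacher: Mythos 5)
Your proposal is correct and follows essentially the same route as the paper: part (1) is the observation that all edges point left or down (which you formalize via the monovariant $i-j$), part (2) is the same argument that a crossing forces a black square with a white square both above it and to its left, contradicting the Cauchon condition, and part (3) is the same telescoping cancellation of adjacent factors $t_{i,k}t_{i,k}^{-1}$, merely packaged as an induction. The extra care you take in (2) to rule out crossings between two horizontal or two vertical edges and to justify why the crossing square is black is a welcome elaboration of what the paper leaves implicit, but it is not a different proof.
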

\begin{proof}
As all edges are directed either right to left or top to bottom, the first property is obvious. To see planarity, if two edges cross, then these edges must consist of one vertical and one horizontal edge, and their intersection point corresponds to a black square. This implies that we have a black square in $C$ with both a white square above and a white square to its left, contradicting the definition of a Cauchon diagram.

Finally, if $(i,k)$ is an internal vertex of $P:(i,j_1)\Rightarrow(i,j_2)$ (i.e., $k\neq j_1,j_2$), then $(i,k^-)$ and $(i,k^+)$ exist. Now the edge $e_1:=((i,k^+),(i,k))\in P$ has weight $t_{i,k^+}^{-1}t_{i,k}$ and the edge $e_2= ((i,k),(i,k^-))\in P$ has weight $t_{i,k}^{-1}t_{i,k^-}$. Therefore, $w(e_1)w(e_2)  = t_{i,k^+}^{-1}t_{i,k^-}$.  It follows that $w(P)$ is a telescoping product and so clearly $w(P)=t_{i,j_2}^{-1}t_{i,j_1}$.
\end{proof}

Since the edge weights are in a noncommutative ring, the remainder of this section is devoted to a sequence of lemmas which give commutation relations between edges and between certain paths in a Cauchon graph. These lemmas will be needed only in the proof of Theorem~\ref{gv}.

\begin{edgecommutivity} \label{edgecommutivity}
Let $\mathcal{C}$ be a Cauchon diagram. Let $e$ and $f$ be horizontal edges in $\mathcal{G}_\mathcal{C}$ with both endpoints in $W_\mathcal{C}$ and such that $\emph{row}(f)\leq \emph{row}(e)$.
\begin{enumerate}
\item If $\emph{col}(e)\cap \emph{col}(f)=\emptyset$, then $w(f)w(e)=w(e)w(f)$\label{ec1}.
\item If $|\emph{col}(e)\cap \emph{col}(f)|=1$, then: 
	\begin{description}\label{ec2}
	\item[$i.$] $w(f)w(e) = q\,w(e)w(f)$, if $\emph{col}_i(e)=\emph{col}_i(f)$ for $i=1$ or $i=2$, and
	\item[$ii.$]$w(f)w(e) = q^{-1}\,w(e)w(f)$ otherwise.
	\end{description}
\item If $|\emph{col}(e)\cap \emph{col}(f)|=2$, then $w(f)w(e)=q^2 w(e)w(f)$.\label{ec3}
\end{enumerate}
\end{edgecommutivity}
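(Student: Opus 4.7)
The proof is a direct computation from the commutation relations of Definition~\ref{talg}. First I would parametrize the edges: write $e=((i,k_1),(i,k_2))$ with $k_1=\text{col}_1(e)>\text{col}_2(e)=k_2$, so that $w(e)=t_{i,k_1}^{-1}t_{i,k_2}$, and similarly $w(f)=t_{i',l_1}^{-1}t_{i',l_2}$ with $l_1>l_2$ and $i'\leq i$. The key observation is that in $\mathcal{B}$ any two distinct canonical generators satisfy $t_{s,t}\,t_{u,v}=q^{\epsilon((s,t),(u,v))}\,t_{u,v}\,t_{s,t}$ for a scalar $\epsilon\in\{-1,0,+1\}$ read off from Definition~\ref{talg}: $\epsilon=0$ whenever $s\neq u$ and $t\neq v$, $\epsilon=+1$ if $(s=u$ and $t<v)$ or $(t=v$ and $s<u)$, and $\epsilon=-1$ in the two opposite configurations. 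Inverses negate the scalar and the relation propagates through monomials, which yields
\[
w(f)\,w(e)=q^{\alpha}\,w(e)\,w(f),
\]
with
\[
\alpha=\epsilon((i',l_1),(i,k_1))-\epsilon((i',l_1),(i,k_2))-\epsilon((i',l_2),(i,k_1))+\epsilon((i',l_2),(i,k_2)).
\]

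Each part of the lemma then follows from a routine case analysis of $\alpha$ controlled by $|\text{col}(e)\cap\text{col}(f)|$. When the columns are disjoint, each $\epsilon$-term either vanishes (if $i'<i$, all four entries come from distinct rows and distinct columns) or is nonzero with signs that cancel in the alternating sum (if $i'=i$, each of the four terms has the same sign, determined by the relative order of the two column pairs). When exactly one column is shared, precisely one of the four $\epsilon$-terms is nonzero: it equals $+1$ and appears with a positive sign in $\alpha$ in case~(i) (that is, when $\text{col}_1(e)=\text{col}_1(f)$ or $\text{col}_2(e)=\text{col}_2(f)$), while in case~(ii) the surviving nonzero $\epsilon$ is still $+1$ but carries a minus sign in the alternating sum, giving $\alpha=-1$. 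Finally, $|\text{col}(e)\cap\text{col}(f)|=2$ forces $l_1=k_1$ and $l_2=k_2$ by the column ordering, and the two surviving $\epsilon$-terms each contribute $+1$ to give $\alpha=2$.

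The only non-trivial aspect of the argument is bookkeeping: each individual $\epsilon$ is immediate from Definition~\ref{talg}, but the alternating sum and the sign of each term must be tracked carefully. A mild additional subtlety arises in the borderline case $i'=i$ (where both edges lie on the same row and so share a vertex), because the hypothesis $\text{row}(f)\leq\text{row}(e)$ alone does not distinguish $e$ from $f$; the formula for $\alpha$ still delivers the advertised answer in case~(2)(ii), but one must select the labelling of $e$ and $f$ consistently with the orientation used in the subsequent application of the lemma in the proof of Theorem~\ref{gv}.
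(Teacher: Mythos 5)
Your proof is correct and is essentially the paper's argument --- a direct verification from the commutation relations of $\mathcal{B}$ --- just organized more systematically through the single alternating-sum formula for $\alpha$ (whose signs you have right: the inverted generators $t_{i',l_1}^{-1}$ and $t_{i,k_1}^{-1}$ flip the exponent, giving the $+,-,-,+$ pattern), and it is in fact more complete than the paper's proof, which writes out only the disjoint-column case and dismisses the rest as "similar." The one assertion you leave unjustified is that in the same-row, disjoint-column case all four $\epsilon$-terms carry the same sign: this is equivalent to saying that the column pairs of $e$ and $f$ do not interleave, which fails for arbitrary pairs of disjoint intervals and must be derived from the structure of $\mathcal{G}_\mathcal{C}$ --- the paper supplies exactly this, observing that the Cauchon condition forces every column strictly between $\text{col}_1(e)$ and $\text{col}_2(e)$ to be black all the way up, so no horizontal edge can end there; you should add that one line. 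Your closing remark about the equal-row instance of case (2)($ii$) is a correct and worthwhile catch rather than a defect of your argument: when $e$ and $f$ are consecutive edges of one row the hypothesis $\text{row}(f)\le\text{row}(e)$ does not fix the labelling, and your formula gives $\alpha=-1$ only when $f$ is the right-hand edge (the orientation actually used when the lemma is invoked in Lemma~\ref{splitpath}) and $\alpha=+1$ for the opposite labelling --- a point the paper's statement passes over in silence.
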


\begin{figure}
\centering
\includegraphics[height=12cm]{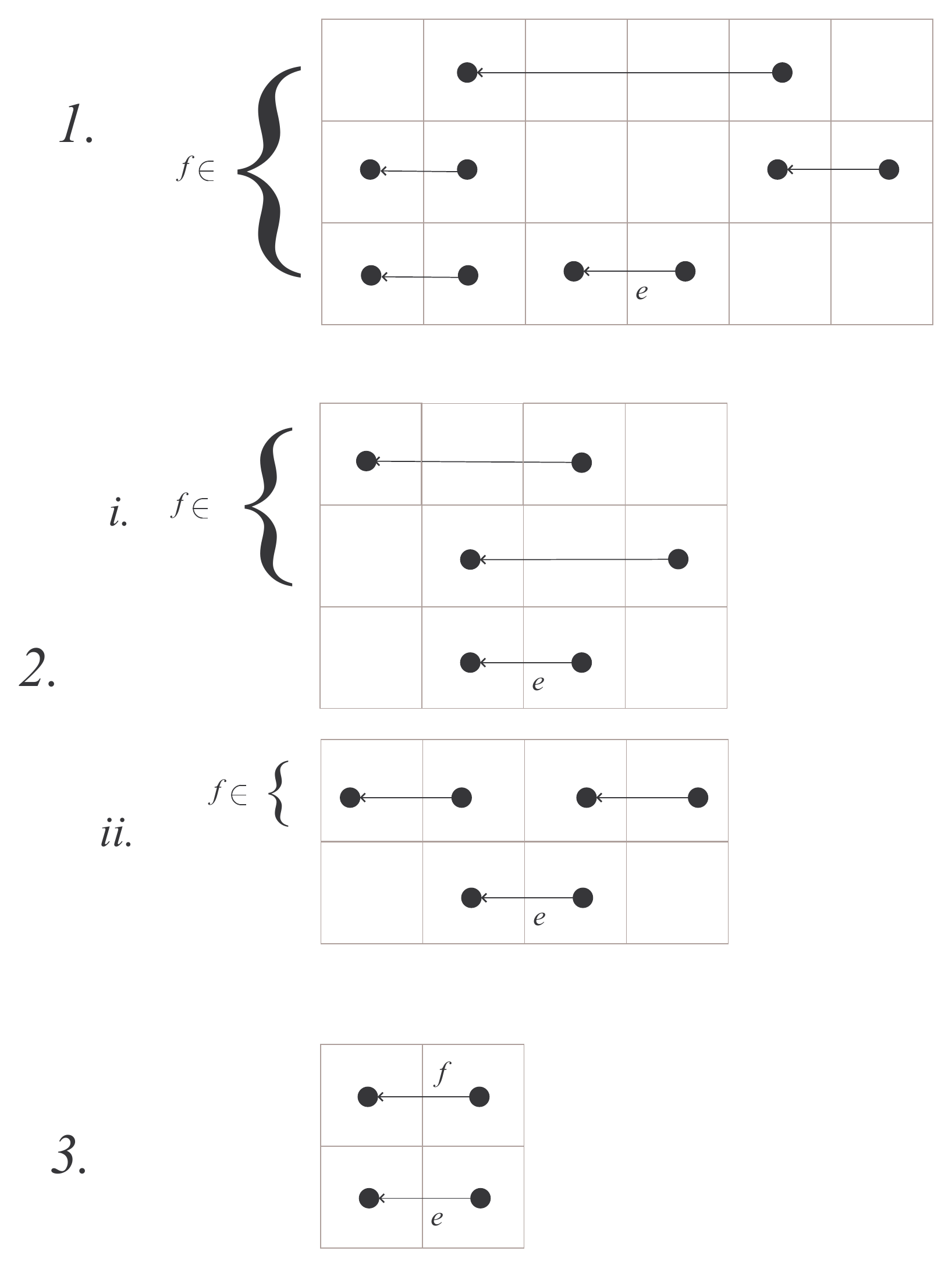}
\caption{Examples of the different cases in Lemma~\ref{edgecommutivity}.} \label{ecomm}
\end{figure}

\begin{proof}
We verify the case $\text{col}(e)\cap \text{col}(f)=\emptyset$. The other cases can be disposed of in a similar manner by checking the possibilities (see Figure~\ref{ecomm}). 
So suppose that $\text{col}(e)\cap \text{col}(f)=\emptyset$.
 
First note that if $j$ is such that $\text{col}_2(e)<j<\text{col}_1(e)$, then it follows that entry $(\text{row}(e),j)$ is black in $\mathcal{C}$, and since $(\text{row}(e),\text{col}_2(e))$ is a white square to its left, we must have that $(i,j)$ is black for every $i\leq\text{row}(e)$. In other words, no horizontal edge in $\mathcal{G}_\mathcal{C}$ has an endpoint whose column coordinate lies strictly between the column coordinates of $e$. 

Now if $\text{row}(e)\neq \text{row}(f)$, then $w(e)$ and $w(f)$ clearly commute by the definition of the algebra $\mathcal{B}$. Suppose that $\text{row}(e)= \text{row}(f)$. Say $w(e)=t_{i,j_1}^{-1}t_{i,j_2}$ and $w(f)=t_{i,j_3}^{-1}t_{i,j_4}$ where $j_1>j_2>j_3>j_4$. We have 
\begin{eqnarray*}
w(e)w(f) & = & (t_{i,j_1}^{-1}t_{i,j_2})(t_{i,j_3}^{-1}t_{i,j_4})\\ 
& =&  (q^{-1} q)\  t_{i,j_3}^{-1}(t_{i,j_1}^{-1}t_{i,j_2})t_{i,j_4} \\
& =& (q^{-1}q)(qq^{-1})\ (t_{i,j_3}^{-1}t_{i,j_4})(t_{i,j_1}^{-1}t_{i,j_2})\\
&=& w(f)w(e).
\end{eqnarray*}
\end{proof}

Note that Lemma~\ref{edgecommutivity}, parts (1) and (2) remain true if $e$ or $f$ is an edge which has an endpoint in $R$. 

\begin{splitpath}\label{splitpath}
Let $K:v_0\Rightarrow v$ and $L:v\Rightarrow v_t$ be directed paths in a Cauchon graph.
\begin{enumerate}
\item If either $K$ or $L$ contain only vertical edges, then $w(K)w(L)=w(L)w(K)$.
\item If both $K$ and $L$ contain a horizontal edge, then $w(K)w(L)=q^{-1}\,w(L)w(K)$.
\end{enumerate}
\end{splitpath}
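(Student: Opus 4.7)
Part~(1) is immediate: by items~2 and 4 of Definition~\ref{cauchongraph}, every column edge and every white-to-white vertical edge has weight $1$, so if either $K$ or $L$ consists only of vertical edges then its weight equals $1$ and the conclusion is trivial.

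For part~(2) my plan is to strip the vertical edges (each contributing weight~$1$) and reduce everything to Lemma~\ref{edgecommutivity}. Write $w(K) = w(e_1)\cdots w(e_p)$ and $w(L) = w(f_1)\cdots w(f_q)$ where $e_1,\ldots,e_p$ and $f_1,\ldots,f_q$ are the horizontal edges of $K$ and $L$ listed in traversal order. Since $v$ is the terminus of one path and the origin of another it must be a white square, say $v=(r,c)$. Rows weakly increase along any directed path, so $\mathrm{row}(e_i)\leq r\leq \mathrm{row}(f_j)$ for all $i,j$, which is exactly the hypothesis of Lemma~\ref{edgecommutivity} with $f=e_i$ (upper) and $e=f_j$ (lower). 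The goal is then to move each $f_j$ past each $e_i$ in the product $w(K)w(L)=e_1\cdots e_p f_1\cdots f_q$ and show that the accumulated scalars multiply to exactly $q^{-1}$.

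The key geometric observation is that \emph{the pair $(e_p,f_1)$ is the only pair whose column sets meet}. After $e_p$ the path $K$ consists solely of vertical steps, so $\mathrm{col}_2(e_p)=c$; for any $e_i$ with $i<p$ at least one further horizontal step intervenes before $v$, so both column coordinates of $e_i$ strictly exceed $c$. Symmetrically, the purely vertical portion of $L$ preceding $f_1$ forces $\mathrm{col}_1(f_1)=c$, while every subsequent $f_j$ has both column coordinates strictly less than $c$. Consequently, for $(i,j)\neq(p,1)$ we have $\mathrm{col}(e_i)\cap\mathrm{col}(f_j)=\emptyset$, and part~(1) of Lemma~\ref{edgecommutivity} gives $w(e_i)w(f_j)=w(f_j)w(e_i)$. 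For the remaining pair, $\mathrm{col}(e_p)\cap\mathrm{col}(f_1)=\{c\}$ with $\mathrm{col}_2(e_p)=\mathrm{col}_1(f_1)=c$, placing us in case~(2)(ii) of Lemma~\ref{edgecommutivity} and giving $w(e_p)w(f_1)=q^{-1}w(f_1)w(e_p)$.

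Commuting $f_1$ leftward past $e_p,e_{p-1},\ldots,e_1$ then $f_2$ past $e_p,\ldots,e_1$, and so on, the only nontrivial swap is the very first (involving $e_p$ and $f_1$); all others are free by the disjointness above. The total scalar is therefore $q^{-1}$, yielding $w(K)w(L)=q^{-1}w(L)w(K)$. The one subtlety to watch is the degenerate case in which $e_p$ is itself a row edge $r_r\to(r,c)$ (so $\mathrm{col}_1(e_p)$ is not defined as a column index); here I appeal to the remark immediately after Lemma~\ref{edgecommutivity} that parts~(1) and~(2) of the lemma remain valid when one of the edges has an endpoint in $R$, and one checks directly that the intersection $\{c\}$ still falls under case~(2)(ii), preserving the $q^{-1}$ factor. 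Pinning down this column geometry—especially that no earlier $e_i$ can sneak back into column $c$—is the main point needing care, but it is forced by the fact that columns are non-increasing along directed paths.
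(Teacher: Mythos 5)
Your proof is correct and follows essentially the same route as the paper's: reduce to the horizontal edges, observe that the last horizontal edge of $K$ and the first horizontal edge of $L$ form the only pair with intersecting column sets (falling under case (2ii) of Lemma~\ref{edgecommutivity}), and commute everything else for free. Your explicit justification of the column geometry, and the remark handling the case where the last horizontal edge of $K$ is a row edge, are slightly more detailed than the paper's appeal to the planar embedding, but the argument is the same.
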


\begin{figure}
\centering
\includegraphics[height=6cm]{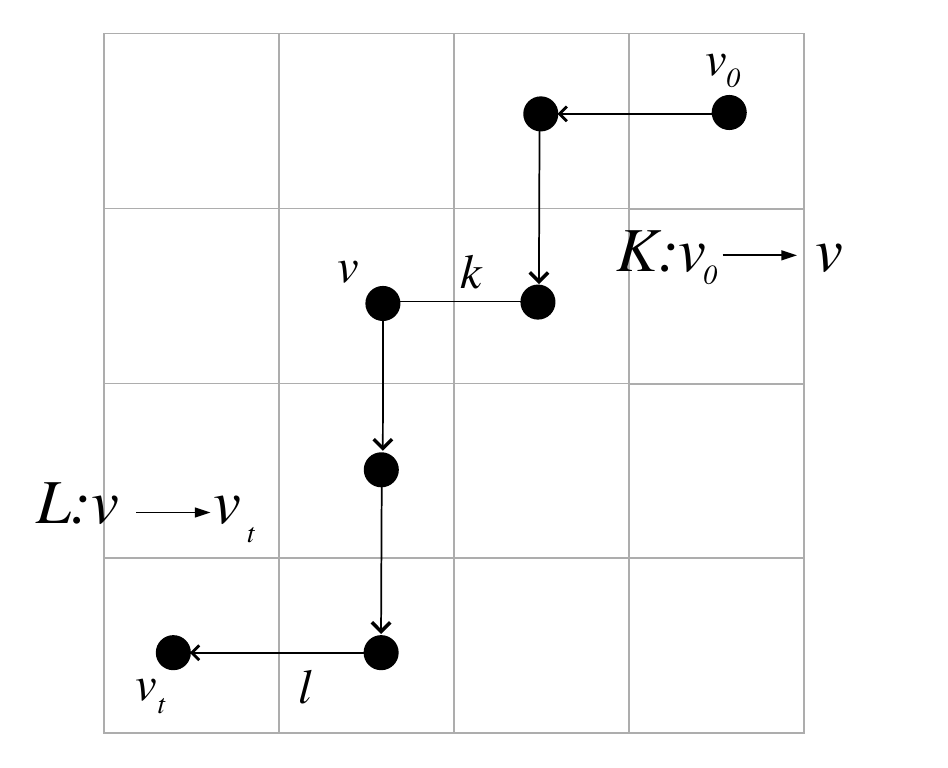}
\caption{Example of a situation in Lemma~\ref{splitpath}.}
\end{figure}

\begin{proof}
We need only consider the horizontal edges of $K$ and $L$ since all vertical edges have weight $1$ and so commute with everything. Now if either $K$ or $L$ contain only vertical edges, then $w(K)=1$ or $w(L)=1$ and so $w(K)$ and $w(L)$ commute. 

Suppose that both $K$ and $L$ contain horizontal edges. Let $k$ be the last horizontal edge in $K$ and let $l$ be the first horizontal edge in $L$. By the embedding of a Cauchon graph, the horizontal edges of $L$ are always to the left of horizontal edges of $K$ or ``south-west'' of $K$. When computing $w(K)w(L)$, the only edge weights which do not commute are $w(k)$ and $w(\ell)$ by Lemma~\ref{edgecommutivity}\,(1). By Lemma~\ref{edgecommutivity}\, $(2ii)$ we obtain
\begin{eqnarray*}
w(K)w(L) & = & w(K\setminus\{k\})w(k)w(l)w(L\setminus\{l\})\\
& = & q^{-1}\,w(K\setminus\{k\})w(l)w(k)w(L\setminus\{l\})\\
 &= & q^{-1}\,w(l)w(L\setminus\{l\})w(K\setminus\{k\})w(k)\\
 & = & q^{-1}\,w(L)w(K)
\end{eqnarray*}
\end{proof}

\begin{disjointpath} \label{disjointpath}
Let $\mathcal{G}_\mathcal{C}$ be a Cauchon graph and let $K:v\Rightarrow c_i$ and $L:v\Rightarrow c_j$ be two directed paths with only their initial vertex in common. Let $K$ be the path that starts with a horizontal edge and $L$ be the path that starts with a vertical edge. The weights of the two paths commute as follows.
\begin{enumerate}
\item If $L$ consists only of vertical edges (or no edges at all), then $w(K)w(L) = w(L)w(K)$.
\item If $L$ has a horizontal edge then $w(K)w(L) = q\,w(L)w(K)$.
\end{enumerate}
\end{disjointpath}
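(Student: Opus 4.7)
\textbf{Case 1} is immediate: every vertical edge has weight $1$, so $w(L)=1$ and the identity collapses to $w(K)=w(K)$. The interesting work is in Case 2, where I would exploit the fact that $\mathcal{B}$ is the McConnell--Pettit algebra of a quantum affine space: any two monomials in the $t_{i,j}^{\pm 1}$ quasi-commute with a scalar factor $q^N$, where $N$ depends only (and bilinearly) on the net exponents of the $t_{i,j}$ appearing in each. Using the telescoping identity of Lemma~\ref{cgproperties}(3), each horizontal segment of $K$ in row $\rho$ from column $A$ to $B$ contributes exponent $-1$ at $t_{\rho,A}$ and $+1$ at $t_{\rho,B}$, with all interior columns canceling; the analogous statement holds for $L$, and since vertical edges have weight $1$, these are the only contributions. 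Writing $w(K)w(L) = q^{N} w(L)w(K)$, I would decompose $N = N_{\mathrm{row}} + N_{\mathrm{col}}$ into contributions from pairs of generators in the same row and the same column, respectively.

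I would then argue: (i) $N_{\mathrm{row}} = 0$. In any row $\rho$ containing horizontal segments of both $K$ and $L$, planarity combined with vertex-disjointness force the column-intervals of the two segments to be disjoint as intervals; a direct sign calculation then yields zero. (ii) $N_{\mathrm{col}} = 1$. By a symmetric disjointness argument for vertical runs in a shared column, $N_{\mathrm{col}}(c) = 0$ for every column $c$ except $c = s$, the column of the common initial vertex $v=(r,s)$. In that column, each of $K$ and $L$ contributes a single exponent of $-1$, at rows $r$ and $r''$ respectively (where $r'' > r$ is the row of $\ell_1$, the first horizontal edge of $L$), giving $N_{\mathrm{col}}(s) = (-1)(-1)\cdot\mathrm{sign}(r''-r) = +1$. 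Combining gives $N=1$, so $w(K)w(L) = q\,w(L)w(K)$.

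The main obstacle is the boundary bookkeeping: one must verify that disjointness forces the contributions at $c=i$ (end column of $K$) and $c=j$ (end column of $L$) to vanish, even when these columns lie in the other path's column sequence, while leaving the contribution at $c=s$ intact. This reduces to a finite but delicate case analysis classifying each column by whether it appears as a start, end, or interior column of each of the two paths' column sequences.
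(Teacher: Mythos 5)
Your argument is correct, but it takes a genuinely different route from the paper's. The paper proves the lemma locally: it isolates the first horizontal edge $e_1$ of $L$, shows by a two-case analysis on the first one or two horizontal edges of $K$ that $w(K)w(e_1)=q\,w(e_1)w(K)$, and then runs a three-case analysis to show that every later horizontal edge of $L$ commutes with $w(K)$ outright; everything rests on the pairwise edge relations of Lemma~\ref{edgecommutivity}. You instead work globally in the quantum torus $\mathcal{B}$: since $w(K)$ and $w(L)$ are monomials they quasi-commute by $q^{N}$ with $N$ a bilinear form in the net exponent vectors, and the telescoping of Proposition~\ref{cgproperties}(3) reduces those vectors to a $-1$ and a $+1$ at the two ends of each maximal horizontal run. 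Your deferred bookkeeping does close up: because $K$ leaves $v=(r,s)$ westward and $L$ southward, planarity and vertex-disjointness force $K$ to stay strictly west of $L$ thereafter, so in any shared row (resp.\ column) the two runs occupy disjoint, non-interleaved intervals and the sign in the bilinear form is constant over all cross-pairs; the net exponent of each path in each row, and in each column other than its initial and terminal one, is $0$; the terminal columns contribute nothing because $L$ never reaches column $i$ (the westness gives $i<j$) while $K$'s net exponent in column $j$ is $0$; and the entire factor of $q$ is concentrated in column $s$, where $t_{r,s}^{-1}$ from $K$ meets $t_{r'',s}^{-1}$ from $L$ with $r<r''$. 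Your approach buys a structural explanation of why the exponent is exactly $1$ (it is the pairing of the two initial $-1$'s in the shared starting column); the paper's buys directness, reusing Lemma~\ref{edgecommutivity} without ever needing the global separation statement. One caution for when you write out the case analysis: sign constancy matters even for pairs whose net exponents vanish (an interleaved $(+1,-1)$ run against a single $+1$ would contribute $2$, not $0$), so the non-interleaving of vertical runs in a shared column is a load-bearing step, not an optional refinement --- it is true, but it must be proved, not just asserted.
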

\begin{proof}
Case 1 is obvious since here we have $w(L)=1$, so we may suppose that $L$ has at least one horizontal edge. By Lemma~\ref{edgecommutivity}, any horizontal edge $e$ in $L$ commutes with any edge in $K$ except those whose column coordinates intersect the set $\text{col}(e)$. Recall from the proof of Lemma~\ref{edgecommutivity} that no edge in $K$ has an endpoint in between (with respect to column coordinates) the endpoints of edges in $L$.

If $f_1$ is the first horizontal edge in $K$ and $e_1$ is the first horizontal edge in $L$, then we have $\text{r.col}(e_1)=\text{r.col}(f_1)$. There are two cases to consider (see Figure~\ref{lem3}).

\begin{description}
\item[Case $(i)$:] $\text{l.col}(f_1)<\text{l.col}(e_1)$. Here we have that, by Lemma~\ref{edgecommutivity} $(2i)$, $$w(f_1)w(e_1)=q\, w(e_1)w(f_1).$$
\item[Case $(ii)$:] $\text{l.col}(f_1)=\text{l.col}(e_1)$. In this case, the second horizontal edge $f_2$ of $K$ satisfies $\text{r.col}(f_2)=\text{l.col}(e_1)$ and $\text{l.col}(f_2)<\text{l.col}(e_1)$. Applying Lemma~\ref{edgecommutivity} twice we find 
\begin{eqnarray*}
w(f_1)w(f_2)w(e_1) & = & w(f_1) q^{-1}w(e_1)w(f_2) \text{, by Lemma~\ref{edgecommutivity} (2.i),}\\
& = & (q^{-1}q^2)\,w(e_1)w(f_1)w(f_2) \text{, by Lemma~\ref{edgecommutivity} (3),}\\
& = & q\,w(e_1)w(f_1)w(f_2).
\end{eqnarray*}
\end{description}
It follows that $w(K)w(e_1)=q\,w(e_1)w(K)$.

\begin{figure} 
\centering
\includegraphics[height=8cm]{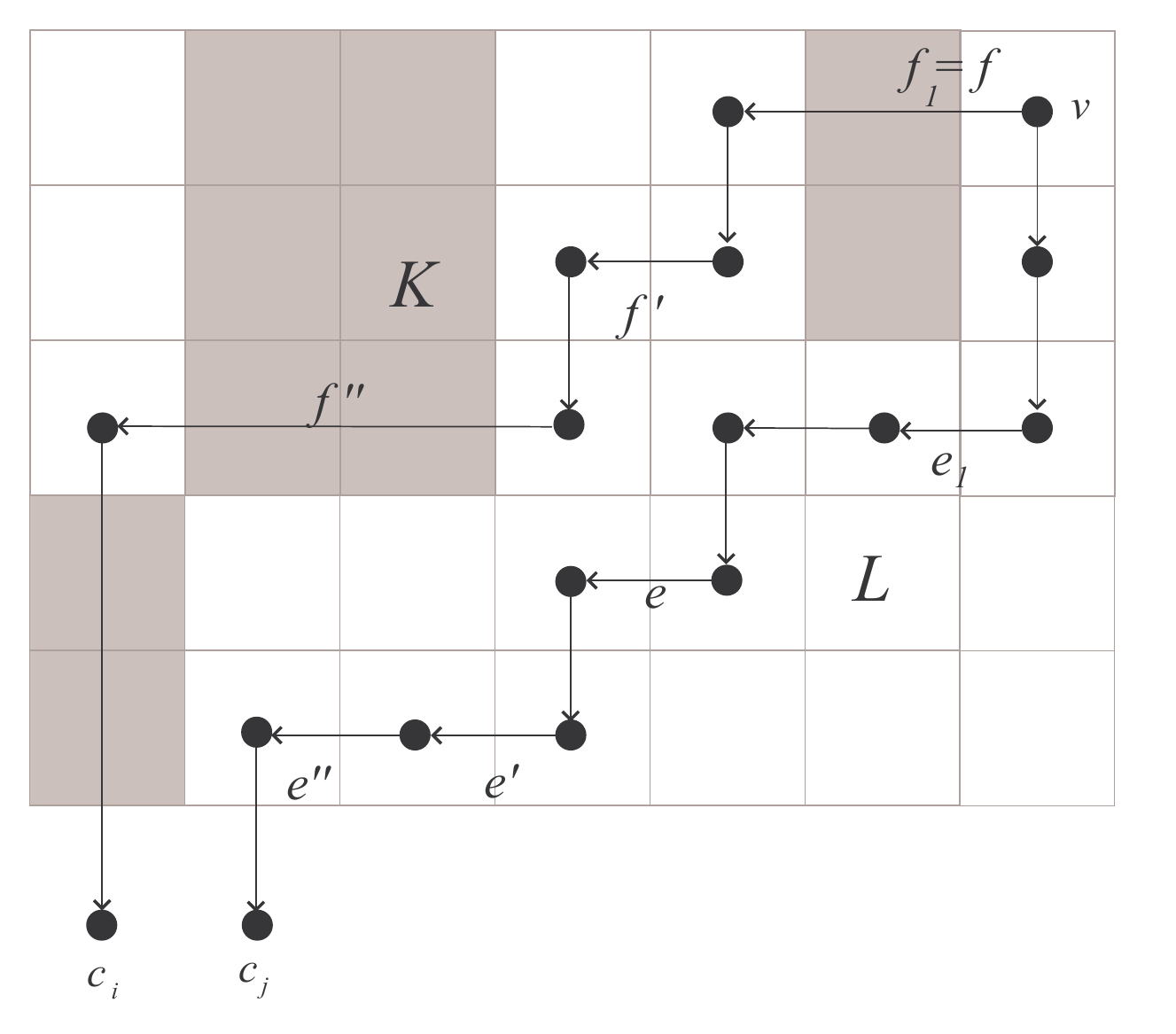}
\caption{Example of a typical situation of Lemma~\ref{disjointpath}.}\label{lem3}
\end{figure}

Now suppose that $e$ is \emph{not} the first horizontal edge in $L$. We show that $w(e)w(K)=w(K)w(e)$. There are exactly three possibilities for the edge $e$.

\begin{description}
\item[Case (a):] Every edge $f$ in $K$ satisfies $\text{col}(f)\cap\text{col}(e) = \emptyset$. For an example of an edge which falls in this case, see edge $e^{\prime\prime}$ in Figure~\ref{lem3}. By Lemma~\ref{edgecommutivity}\,(1), it follows that $w(e)w(K)=w(K)w(e)$. 

\item[Case (b):] There exist two distinct edges $f^\prime$ and $f^{\prime\prime}$ in $K$ such that $|\text{col}(f^\prime)\cap\text{col}(e)|=1$ and $|\text{col}(f^{\prime\prime})\cap\text{col}(e)|=1$, and $\text{col}(g)\cap\text{col}(e) = \emptyset$ for all other edges $g$ in $K$. For an example of an edge which falls in this case, see edge $e^\prime$ in Figure~\ref{lem3}. Now together with $e$, both $f^\prime$ and $f^{\prime\prime}$ fall under case (2) of Lemma~\ref{edgecommutivity}. Furthermore, exactly one is in subcase $(2i)$ of that lemma, while the other is in subcase $(2ii)$. Without loss of generality, we suppose that $f^\prime$ is in case $(2ii)$ of Lemma~\ref{edgecommutivity}. We have
\begin{eqnarray*}
w(f^{\prime\prime})w(f^\prime)w(e) & = & w(f^{\prime\prime})q^{-1}\,w(e)w(f^\prime) \\
& = & q\,w(e)w(f^{\prime\prime})q^{-1}\,w(f^\prime)\\
& = & w(e)w(f^{\prime\prime})w(f^\prime).
\end{eqnarray*}

Since by Lemma~\ref{edgecommutivity}\,(1) we have that $w(e)$ commutes with $w(g)$ for every edge $g$ in $K$ with $\text{col}(g)\cap\text{col}(e) = \emptyset$, it follows that we again have $w(e)w(K)=w(K)w(e)$, as desired.

\item[Case (c):] There exist three distinct edges $f$,$f^\prime$ and $f^{\prime\prime}$ in $K$ such that $e$ and $f^\prime$ fall under Lemma~\ref{edgecommutivity} (3), while $e$, together with either $f$ or $f^{\prime\prime}$ fall into Lemma~\ref{edgecommutivity} $(2ii)$. Furthermore, $\text{col}(g)\cap\text{col}(e) = \emptyset$ for all other edges $g$ in $K$. For an example of this case, see edge $e$ in Figure~\ref{lem3}. We have

\begin{eqnarray*}
w(f^{\prime\prime})w(f^\prime)w(f)w(e) & = &  w(f^{\prime\prime})w(f^\prime)q^{-1}w(e)w(f) \\
& = & w(f^{\prime\prime})q^2\, w(e)w(f^\prime)q^{-1}w(f)\\
& = & q^{-1}\,w(e)w(f^{\prime\prime})q^2\,w(f^\prime)q^{-1}w(f)\\
& = & w(e)w(f^{\prime\prime})w(f^\prime)w(f).
\end{eqnarray*}
\end{description}

Since by Lemma~\ref{edgecommutivity}\,(1) we know $w(e)$ commutes with $w(g)$ for every edge $g$ in $K$ with $\text{col}(g)\cap\text{col}(e) = \emptyset$, it follows that we again have $w(e)w(K)=w(K)w(e)$, as desired. This completes the analysis of all possibilities for $e$ not being the first horizontal edge in $L$, and so we conclude that $w(e)w(K)=w(K)w(e)$ for all such edges $e$.  Hence
\begin{eqnarray*}
w(K)w(L) & = & w(K)w(e_1)w(L\setminus e_1)\\
 & = & q\,w(e_1)w(K)w(L\setminus e_1)\\
 & = & q\,w(e_1)w(L\setminus e_1)w(K)\\
 & = & q\,w(L)w(K).
 \end{eqnarray*}
\end{proof}

\section{vertex-disjoint Path Systems and $q$-Determinants}

In this section we give one of the main tools used in the proof of Theorem~\ref{main}. We begin with some definitions.

\begin{pathsystem}
Let $\mathcal{C}$ be an $m\times n$ Cauchon diagram. Let $I=\{i_1,\ldots,i_k\}\subseteq [m]$ and $J=\{j_1,\ldots,j_k\}\subseteq [n]$ be two subsets of equal size with $i_1<i_2<\cdots<i_k$ and $j_1<j_2<\cdots j_k$.

An \textit{$(R_I,C_J$)-path system} $\mathcal{P}$ is a set of $k$ directed paths in $\mathcal{G}_\mathcal{C}$, each starting at different a vertex in $R_I$ and each ending at a different vertex in $C_J$. Note the following.

\begin{itemize} 
\item There exists a permutation $\sigma_\mathcal{P}\in S_k$ such that 
$$\mathcal{P} = \{P_\ell:r_{i_\ell}\Rightarrow c_{j_{\sigma_\mathcal{P}(\ell)}}\mid \ell\in [k]\}.$$

\item The \emph{$q$-sign} of $\mathcal{P}$ is the quantity $$\text{sgn}_q(\mathcal{P})=(-q)^{\ell(\sigma_\mathcal{P})},$$ where we recall that $\ell(\sigma_\mathcal{P})$ is the length of the permutation $\sigma_\mathcal{P}$ as defined in Notation~\ref{basicnotation}.

\item A path system is \textit{vertex-disjoint} if no two paths share a vertex.
\item The weight of $\mathcal{P}$ is the product $\prod_{\ell=1}^k w(P_\ell) = w(P_1)w(P_2)\cdots w(P_k)$.
\end{itemize}
\end{pathsystem}

We will need the following easy lemma.

\begin{consecutivepaths} \label{consec}
Let $\mathcal{C}$ be a Cauchon diagram and let $I\subseteq [m]$ and $J\subseteq[n]$ be two sets of cardinality $k$. If $\mathcal{P}=\{P_1,\ldots,P_k\}$ is a \emph{non}-vertex-disjoint $(R_I,C_J)$-path system in $\mathcal{G}_\mathcal{C}$, then there exists an $i$ such that $P_i$ and $P_{i+1}$ share a vertex.
\end{consecutivepaths}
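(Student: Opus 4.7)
I would argue by contradiction. Suppose $\mathcal{P}$ is not vertex-disjoint yet no two \emph{consecutive} paths share a vertex. Among all pairs with $P_a\cap P_b\ne\emptyset$, pick one with $a<b$ minimizing $b-a$. By hypothesis $b-a\ge 2$, so fix any $c$ with $a<c<b$. I will show that $P_c$ must meet $P_a$ or $P_b$, contradicting the minimality of $b-a$. This reduces the problem to a single geometric/planarity statement: a path starting between two paths (with respect to the right-edge ordering of the sources) that share a vertex must itself meet one of them.

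To establish this, let $v$ be a common vertex of $P_a$ and $P_b$, let $A$ be the initial subpath of $P_a$ from $r_{i_a}$ to $v$, and let $B$ be the initial subpath of $P_b$ from $r_{i_b}$ to $v$. Using the planar embedding of $\mathcal{G}_\mathcal{C}$ from Proposition~\ref{cgproperties}(2), form the closed curve $\gamma$ obtained by concatenating $A$, the reversal of $B$, and the vertical segment on the right edge of the plane joining $r_{i_b}$ up to $r_{i_a}$. Since $\mathcal{G}_\mathcal{C}$ is planarly embedded, $\gamma$ is a simple closed curve, and so by the Jordan curve theorem it bounds a region $R$. Because $\gamma$ is contained in the grid together with the right-edge segment, every $c_j$ vertex (which sits below the grid) lies in the unbounded complement of $\gamma$; in particular the endpoint $c_{j_{\sigma_\mathcal{P}(c)}}$ of $P_c$ lies outside $R$. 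On the other hand, $r_{i_c}$ lies on $\gamma$ itself (in the interior of the right-edge segment, since $i_a<i_c<i_b$), and the first edge of $P_c$ heads leftward off the right edge, hence into $R$. Consequently $P_c$ must exit $R$ before reaching $c_{j_{\sigma_\mathcal{P}(c)}}$, so $P_c$ crosses $\gamma$. By planarity, such a crossing must occur at a graph vertex; that vertex cannot lie in the interior of the right-edge segment, since the only graph vertices there are the sources $r_\ell$, which (having no incoming edges) cannot be internal vertices of a directed path. Therefore $P_c$ shares a vertex with $A\subseteq P_a$ or with $B\subseteq P_b$, contradicting the minimality of $b-a$.

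The main obstacle I expect is not any single calculation but making the Jordan-curve step fully rigorous: in particular verifying that $R$ lies to the left of the right-edge segment (so that the first edge of $P_c$ really enters $R$) and that no $c_j$ vertex lies in $R$. Both facts are almost visually obvious from the planar embedding described before Proposition~\ref{cgproperties}, but they rely on the fact that every edge of $\mathcal{G}_\mathcal{C}$ points leftward or downward, so that $A$ and $B$ remain confined to the grid and $\gamma$ does not loop around any $c_j$. Once these geometric facts are pinned down, the combinatorial contradiction on $b-a$ is immediate.
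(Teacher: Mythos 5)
Your proposal is correct and follows essentially the same route as the paper: minimize the index gap among intersecting pairs, form the closed curve from the two initial subpaths together with the segment joining their sources, and use planarity to force any path starting between them to meet one of them at a vertex. The only point to tighten is that you should take $v$ to be the \emph{first} common vertex of $P_a$ and $P_b$ (as the paper does), since otherwise the subpaths $A$ and $B$ may share earlier vertices and $\gamma$ need not be a simple closed curve; planarity of the embedding alone does not guarantee this.
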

\begin{proof}
Let $$d=\min \{|i-j| \mid i\neq j \text{ and $P_i$ and $P_j$ share a vertex\}}.$$

Observe that $d$ is well-defined and at least 1 since, by assumption, there exists at least one pair of intersecting paths in $\mathcal{P}$. Let $P_i$ and $P_j$ be two paths which achieve this minimum. Hence there is a first vertex $x\in W_\mathcal{C}$ at which they intersect. If $r_i$ and $r_j$ are the first vertices on the paths $P_i$ and $P_j$ respectively, then the two subpaths $P_i^\prime:r_i\Rightarrow x$ and $P_j^\prime:r_j\Rightarrow x$ together with a new vertical edge $(r_i,r_j)$ form a closed simple loop $L$ in the plane. See Figure~\ref{fig6}.

\begin{figure}
\centering
\includegraphics[height=6cm]{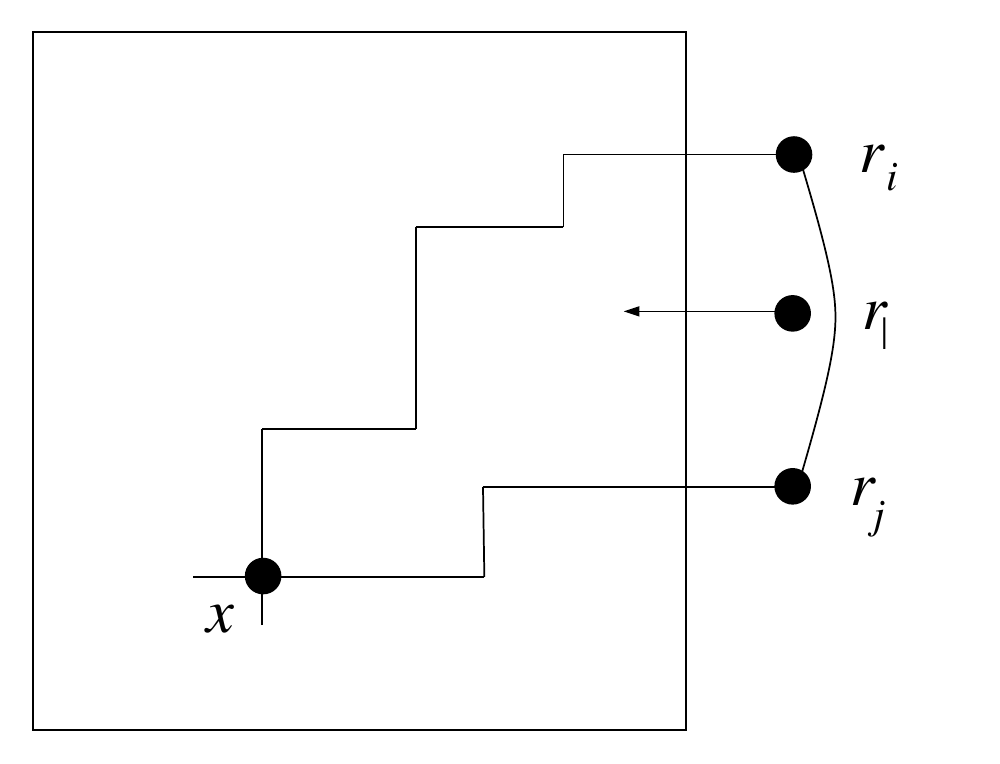}
\caption{Visualization of the proof of Lemma~\ref{consec}. Here we see that the path starting at $r_\ell$ must intersect either $r_i$ or $r_j$ in order to reach $c_{\sigma_\mathcal{P}(\ell)}$}\label{fig6}
\end{figure}

If $d>1$, then there exists an $\ell\in I$ such that $r_\ell$ lies between $r_i$ and $r_j$ in $\mathcal{G}_\mathcal{C}$. But in order for $P_\ell$ to reach its endpoint in $C$, we must have that an internal vertex of $P_\ell$ intersects $L$. This intersection point occurs at a vertex by planarity. Hence $P_\ell$ shares a vertex with either $P_i$ or $P_j$, which contradicts the minimality of $d$.

\end{proof}

\begin{pathmatrixdef}
Let $\mathcal{C}$ be an $m\times n$ Cauchon diagram. The \textit{path matrix} of $\mathcal{G}_\mathcal{C}$ is the $m\times n$ matrix $M_\mathcal{C}$ with $$M_\mathcal{C}[i,j]=\sum_{P:r_i\Rightarrow c_j}w(P),$$ where the sum is over all possible directed paths in $\mathcal{G}_\mathcal{C}$ starting at $r_i$ and ending at $c_j$. If no such path exists, then $M_\mathcal{C}[i,j]=0$.
\end{pathmatrixdef}

Now we are ready to prove the main theorem of this section, whose statement and proof are very much in the spirit of Lindstr\"{o}m's Lemma. 
\begin{gessel}\label{gv}
Let $\mathcal{C}$ be an $m\times n$ Cauchon diagram. If $I\subseteq [m]$ and $J\subseteq [n]$ are two sets of size $k$, then $${\det}_q(M_\mathcal{C}[I,J])=\sum_{\mathcal{P}}w(\mathcal{P}),$$
where the sum is over all \textbf{vertex-disjoint} $(R_I,C_J)$-path systems in $\mathcal{G}_\mathcal{C}$.
\end{gessel}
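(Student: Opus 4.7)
\bigskip

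\noindent\textbf{Proof proposal.} My plan is to follow the classical Lindstr\"{o}m--Gessel--Viennot strategy adapted to this noncommutative $q$-setting. First, I would expand the $q$-determinant using its definition and the definition of the path matrix:
\begin{align*}
{\det}_q(M_\mathcal{C}[I,J]) &= \sum_{\sigma\in S_k}(-q)^{\ell(\sigma)}\prod_{\ell=1}^k M_\mathcal{C}[i_\ell,j_{\sigma(\ell)}] \\
&= \sum_{\sigma\in S_k}(-q)^{\ell(\sigma)}\prod_{\ell=1}^k\Bigl(\sum_{P_\ell:r_{i_\ell}\Rightarrow c_{j_{\sigma(\ell)}}} w(P_\ell)\Bigr) = \sum_\mathcal{P}\mathrm{sgn}_q(\mathcal{P})\,w(\mathcal{P}),
\end{align*}
where the last sum runs over \emph{all} $(R_I,C_J)$-path systems. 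The goal is then to show that every non-vertex-disjoint $\mathcal{P}$ is cancelled by a partner $\mathcal{P}^*$, and that every vertex-disjoint $\mathcal{P}$ contributes with $\mathrm{sgn}_q(\mathcal{P})=1$.

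\smallskip

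For the vertex-disjoint contribution, I would use planarity (Proposition~\ref{cgproperties}(2)) together with the embedding: the boundary vertices appear in clockwise order $r_1,\ldots,r_m,c_n,\ldots,c_1$, so a vertex-disjoint path system, viewed as non-crossing chords, must connect $r_{i_\ell}$ to $c_{j_\ell}$ in order. Hence $\sigma_\mathcal{P}=\mathrm{id}$ and $\mathrm{sgn}_q(\mathcal{P})=1$ for such $\mathcal{P}$, giving exactly the right-hand side. For the cancellation, given a non-vertex-disjoint $\mathcal{P}$, Lemma~\ref{consec} supplies the smallest $a$ with $P_a\cap P_{a+1}\neq\emptyset$. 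I would then let $v$ be the \emph{last} common vertex of $P_a$ and $P_{a+1}$ (so the two tails after $v$ are vertex-disjoint), write $P_a=KL$ and $P_{a+1}=K'L'$ where $K,K'$ are the prefixes up to $v$, and define $\mathcal{P}^*$ by swapping the tails: the new paths are $KL'$ and $K'L$. Since the prefixes are unchanged and the new tails are vertex-disjoint, $v$ is still the last common vertex of the swapped pair in $\mathcal{P}^*$, and by planarity the smallest bad index is still $a$; thus the map is an involution with no fixed points.

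\smallskip

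It remains to compare $\mathrm{sgn}_q(\mathcal{P})w(\mathcal{P})$ with $\mathrm{sgn}_q(\mathcal{P}^*)w(\mathcal{P}^*)$. Since $\sigma_{\mathcal{P}^*}=\sigma_\mathcal{P}\cdot s_a$, one has $\ell(\sigma_{\mathcal{P}^*})=\ell(\sigma_\mathcal{P})\pm 1$, with the sign determined by whether $\sigma_\mathcal{P}(a)<\sigma_\mathcal{P}(a+1)$ or not. By planarity at $v$, this in turn is governed by whether the suffix $L$ leaves $v$ horizontally and $L'$ vertically, or vice versa. On the weight side, I would commute $w(L)$ past $w(K')$ using Lemma~\ref{splitpath} (which treats concatenated paths sharing an endpoint), and $w(L)$ past $w(L')$ using Lemma~\ref{disjointpath} (which treats vertex-disjoint paths emanating from the common vertex $v$). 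A short calculation then shows $w(\mathcal{P}^*)=q^{\pm 1}w(\mathcal{P})$ with exponent matching the length change, so that the two contributions cancel.

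\smallskip

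\noindent\textbf{Main obstacle.} The delicate part is the bookkeeping in Step~4: verifying in each local configuration at $v$ (which outgoing edge each of $L,L'$ takes, whether either consists solely of vertical edges, etc.) that the $q$-factor produced by commuting tails through the prefix $K'$ and past each other matches precisely the factor $(-q)^{\pm 1}$ coming from the change in $\ell(\sigma)$. This is exactly what the sequence of technical commutation Lemmas \ref{edgecommutivity}, \ref{splitpath}, \ref{disjointpath} was designed to control. A secondary subtlety is confirming that the involution is honest: that the canonical choice of $(a,v)$ really is recovered from $\mathcal{P}^*$, which relies on planarity and the minimality of $a$.
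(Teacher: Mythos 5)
Your proposal follows essentially the same route as the paper's own proof: the same expansion of $\det_q$ over all path systems, the same tail-swapping involution at the last common vertex of the first consecutive intersecting pair supplied by Lemma~\ref{consec}, the same use of Lemmas~\ref{splitpath} and~\ref{disjointpath} to show the weights differ by a factor of $q$ matching the unit change in $\ell(\sigma)$, and the same planarity argument forcing $\sigma_\mathcal{P}=\mathrm{id}$ for vertex-disjoint systems. The only cosmetic difference is that the paper sidesteps your $\pm 1$ case analysis by using the involution to assume WLOG that $\sigma_\mathcal{P}(i)<\sigma_\mathcal{P}(i+1)$, so the length always increases by exactly one.
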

\begin{proof}

In order to simplify the presentation of this proof we will take $I=J=\{1,\ldots,k\}$. The proof of the general case is essentially the same but notationally more cumbersome. 

To begin, note that 
\begin{eqnarray*}
{\det}_q(M_\mathcal{C}[I,J]) & = & \sum_{\sigma\in S_k} \text{sgn}_q(\sigma)\left(\prod_{i=1}^k M_\mathcal{C}[i,\sigma(i)]\right) \\
& = & \sum_{\sigma} \text{sgn}_q(\sigma)\left(\prod_{i=1}^k\left(\sum_{P:r_i\Rightarrow c_{\sigma(i)}} w(P)\right)\right) \\
& = & \sum_{\text{$(R_I,C_J$)-path systems $\mathcal{P}$}}\text{sgn}_q(\mathcal{P})w(\mathcal{P}).
\end{eqnarray*}

Let $\mathcal{N}$ be the set of \textit{non}-vertex-disjoint ($R_I,C_J$)-path systems. We claim that $$\sum_{\mathcal{P}\in\mathcal{N}}\text{sgn}_q(\mathcal{P})w(\mathcal{P}) = 0.$$ To show this, we find a fixed-point free involution $\pi:\mathcal{N}\rightarrow\mathcal{N}$ with the property that for every $\mathcal{P}\in\mathcal{N}$, 
\begin{eqnarray}\label{eqn1}
\text{sgn}_q(\mathcal{P})w(\mathcal{P})& = &-\text{sgn}_q(\pi(\mathcal{P}))w(\pi(\mathcal{P})),
\end{eqnarray}
where $\pi(\mathcal{P}):=\{\pi(P_1),\ldots,\pi(P_k)\}.$

Suppose that $\mathcal{P}=\{P_1,\ldots,P_k\}\in\mathcal{N}$. Define $\pi$ as follows. Let $i$ be the minimum index of $I$ such that $P_{i}$ and $P_{i+1}$ intersect (which exists by Lemma~\ref{consec}). Let $x$ be the \textit{last} vertex which they have in common. Let $K_1:r_i \Rightarrow x$ and $L_1:x\Rightarrow c_{\sigma_\mathcal{P}(i)}$ be the two subpaths of $P_i$ such that $P_i=K_1L_1$. Define $K_2$ and $L_2$ from $P_{i+1}$ similarly. Now we set

\begin{figure}
\centering
\includegraphics[height=6cm]{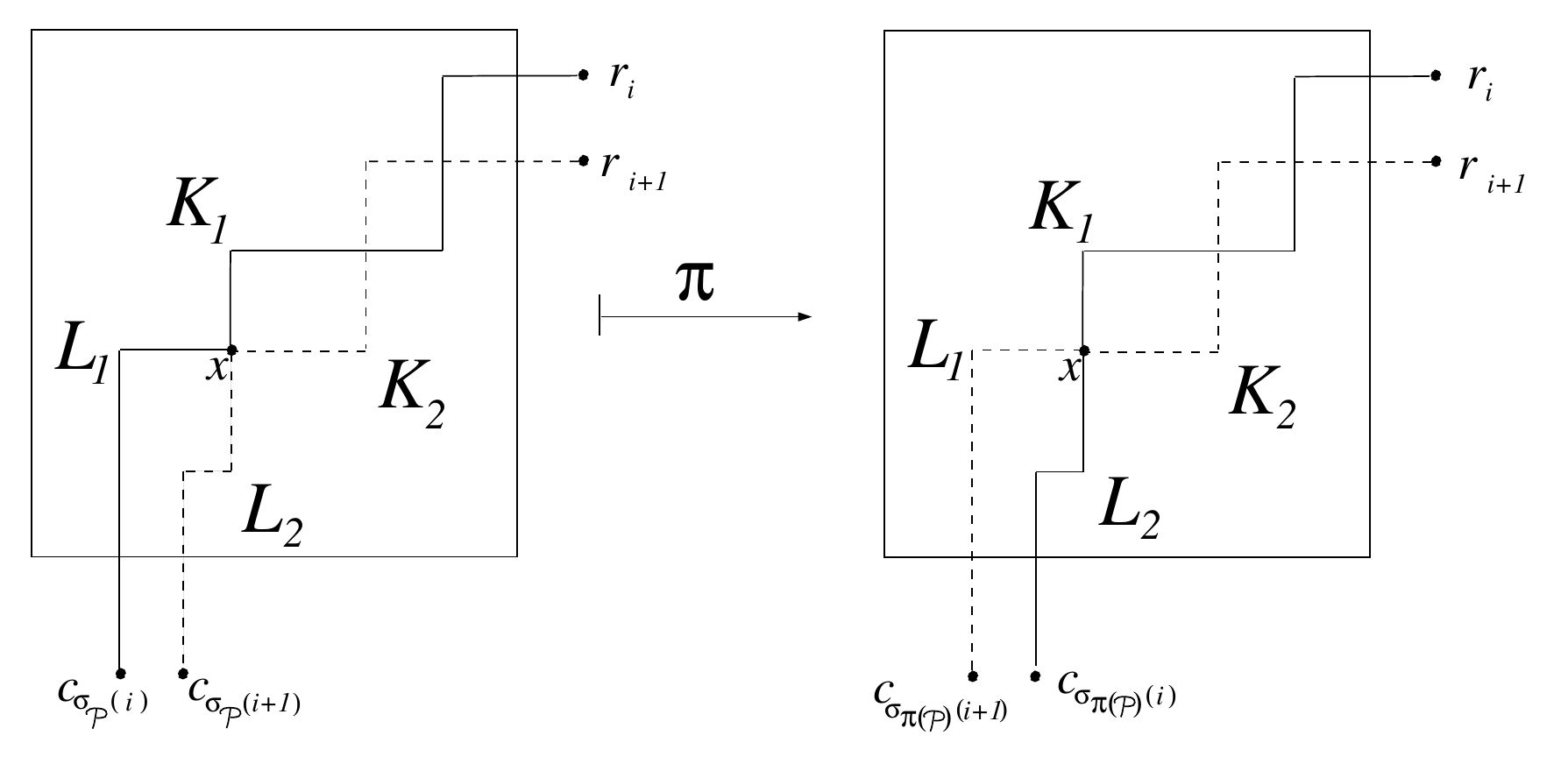}
\caption{Example of how $\pi$ acts on two intersecting paths. On the left hand side, $P_i$ is the solid path and $P_{i+1}$ is the dotted path. On the right hand side, $\pi(P_i)$ is the solid path, $\pi(P_{i+1})$ is the dotted path.}
\end{figure}

\begin{displaymath}
\pi(P_\ell) = \left\{ \begin{array}{ll}
K_1L_2 & \textrm{if $\ell=i$,}\\
K_2L_1 & \textrm{if $\ell={i+1}$,}\\
P_\ell & \textrm{otherwise.}
\end{array} \right.
\end{displaymath} 

It is clear from the definition that $\pi$ is an involution without fixed points so it remains to prove Equation~\ref{eqn1}. 
Since $\pi$ is an involution, we may assume, without loss of generality, that $\sigma_\mathcal{P}(i)<\sigma_\mathcal{P}(i+1)$. Thus $\sigma_{\pi(\mathcal{P})}=\sigma_\mathcal{P}(i\ \  i+1)$ and so $\sigma_{\pi(\mathcal{P})}$ has exactly one more inversion, i.e., 

\begin{eqnarray}\label{two}
\ell(\sigma_{\pi(\mathcal{P})}) & = & \ell(\sigma_\mathcal{P})+1.
\end{eqnarray}

Now consider $w(P_i)w(P_{i+1})$. There are two cases to consider. First suppose that $L_2$ has a horizontal edge. In this case, we find that 
\begin{eqnarray}
w(P_i)w(P_{i+1}) & = & w(K_1)w(L_1)w(K_2)w(L_2) \nonumber \\
& = & w(K_1)\,q\,w(K_2)w(L_1)w(L_2)  \text{  (Lemma~\ref{splitpath})}\nonumber \\
& = & w(K_1)\,q\,q\ w(K_2)w(L_2)w(L_1)  \text{  (Lemma~\ref{disjointpath})}\nonumber \\
& = & w(K_1)\,q\,q\,q^{-1}\,w(L_2)w(K_2)w(L_1)  \text{  (Lemma~\ref{splitpath})} \nonumber \\
& = & q\,w(\pi(P_i))w(\pi(P_{i+1})). \label{three}
\end{eqnarray}
If $L_2$ has only vertical edges, then a similar calculation shows again that $w(P_i)w(P_{i+1})= q\,w(\pi(P_i))w(\pi(P_{i+1}))$. Therefore,
\begin{eqnarray*}
w(\mathcal{P})& = & \left(\prod_{j=1}^{i-1} w(P_j)\right)w(P_i)w(P_{i+1})\left(\prod_{j=i+2}^k w(P_j)\right)\\
& = & \left(\prod_{j=1}^{i-1} w(\pi(P_j))\right)q\,w(\pi(P_i))w(\pi(P_{i+1}))\left(\prod_{j=i+2}^k w(\pi(P_j))\right)\\
& = & q\,w(\pi(\mathcal{P})).
\end{eqnarray*}

Finally, by Equations~\ref{two} and~\ref{three}, we obtain 
\begin{eqnarray*}
\text{sgn}_q(\mathcal{P})w(\mathcal{P}) + \text{sgn}_q(\pi(\mathcal{P}))w(\pi(\mathcal{P})) & = & \\
(-q)^{\ell(\sigma_\mathcal{P})}\,q\,w(\pi(\mathcal{P})) + (-q)^{\ell(\sigma_\mathcal{P})+1}w(\pi(\mathcal{P})) & = & 0.
\end{eqnarray*} This proves Equation~\ref{eqn1} and shows that $${\det}_q(M_\mathcal{C}[I,J])=\sum_{\mathcal{P}}\text{sgn}_q(\mathcal{P})w(\mathcal{P}),$$ where the sum is over all vertex-disjoint $(R_I,C_J)$ path systems. 

By Proposition~\ref{cgproperties},  $\mathcal{G}_\mathcal{C}$ is planar and so $\mathcal{P}$ cannot have any edge crossings. This implies that $\mathcal{P}=\{P_\ell:\ell\Rightarrow \ell\mid \ell=1,\ldots, k\}$. Thus $\sigma_\mathcal{P}$ is the identity permutation and so $\text{sgn}_q(\mathcal{P})=1$. Therefore, we obtain the desired equation in the statement of the theorem, namely, $${\det}_q(M_\mathcal{C}[I,J])=\sum_{\mathcal{P}}w(\mathcal{P}),$$ where the sum is over all vertex-disjoint $(R_I,C_J)$ path systems.
\end{proof}

\begin{maincorollary} \label{maincorollary}
If $\mathcal{C}$ is a Cauchon diagram and $I\subseteq [m]$ and $J\subseteq[n]$ are two subsets of the same size, then $\det_q(M_\mathcal{C}[I,J])=0$ if and only if there does not exist a vertex-disjoint $(R_I,C_J)$-path system.
\end{maincorollary}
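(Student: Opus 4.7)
The plan is to invoke Theorem~\ref{gv}, which already gives $\det_q(M_\mathcal{C}[I,J]) = \sum_\mathcal{P} w(\mathcal{P})$ summed over all vertex-disjoint $(R_I,C_J)$-path systems in $\mathcal{G}_\mathcal{C}$. The ``if'' direction is then immediate: when no such system exists the sum is empty and $\det_q(M_\mathcal{C}[I,J])=0$. The content of the corollary is the contrapositive of the ``only if'' direction, namely that whenever at least one vertex-disjoint $(R_I,C_J)$-path system exists, the sum must be nonzero in $\mathcal{B}$.

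To establish this, I would work in the standard $\mathbb{K}$-basis of the McConnell-Pettit algebra $\mathcal{B}$ consisting of ordered monomials $\mathbf{t}^{\mathbf{e}}=\prod_{(i,j)} t_{i,j}^{e_{i,j}}$ with respect to some fixed total ordering of the index pairs. The key observation is that every edge weight in $\mathcal{G}_\mathcal{C}$ is one of $t_{i,k}$, $t_{i,j}^{-1}t_{i,j^-}$, or $1$---in particular a positive Laurent monomial in the $t_{i,j}$---while the quantum-torus commutation relations only introduce scalars of the form $q^{\pm 1}$ when one reorders the generators. Consequently each weight $w(\mathcal{P})$, once rewritten in the ordered-monomial basis, takes the form $q^{a(\mathcal{P})}\mathbf{t}^{\mathbf{e}(\mathcal{P})}$ for some integer $a(\mathcal{P})$ and integer exponent vector $\mathbf{e}(\mathcal{P})$, with leading coefficient exactly $+1$. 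The $(-q)^{\ell(\sigma_\mathcal{P})}$ sign that a priori appears in $\det_q$ is not an issue, since the planarity argument at the end of the proof of Theorem~\ref{gv} forces $\sigma_\mathcal{P}$ to be the identity for every vertex-disjoint path system.

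Grouping summands by exponent vector, the coefficient of the basis element $\mathbf{t}^{\mathbf{e}}$ in $\sum_{\mathcal{P}} w(\mathcal{P})$ is $\sum_{\mathcal{P}:\,\mathbf{e}(\mathcal{P})=\mathbf{e}} q^{a(\mathcal{P})}$, which is a Laurent polynomial in $q$ with nonnegative integer coefficients and hence is nonzero as a polynomial whenever at least one $\mathcal{P}$ contributes to it. Since $q\in\mathbb{K}^*$ is assumed transcendental over $\mathbb{Q}$, any nonzero element of $\mathbb{Z}[q,q^{-1}]$ evaluates to a nonzero scalar in $\mathbb{K}$. Hence the existence of a single vertex-disjoint path system forces at least one basis coefficient to be nonzero, so $\det_q(M_\mathcal{C}[I,J])\neq 0$ in $\mathcal{B}$.

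The main obstacle I anticipate is making the positivity claim airtight: one must verify carefully that no minus sign can sneak in, neither from the $\mathrm{sgn}_q$ factor (handled by the planarity observation above) nor from reordering edge weights into the standard basis of $\mathcal{B}$ (handled by noting that the commutation relations produce only $q^{\pm 1}$, never $-1$). Once positivity is established, the transcendence of $q$ prevents any $q$-polynomial coefficient from accidentally vanishing, which completes the argument.
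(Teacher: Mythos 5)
Your proposal is correct and follows essentially the same route as the paper's own proof: both reduce to Theorem~\ref{gv}, observe that each vertex-disjoint path system contributes a single Laurent monomial in the $t_{i,j}$ with coefficient a power of $q$ (so that, after rewriting in the ordered-monomial basis of $\mathcal{B}$, every basis coefficient lies in $\mathbb{Z}_{\geq 0}[q,q^{-1}]$ and no cancellation can occur), and then invoke the transcendence of $q$ to conclude nonvanishing. Your grouping by full exponent vector is a slightly cleaner bookkeeping than the paper's grouping by support $Q$, but the argument is the same.
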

\begin{proof}

If there does not exist a vertex-disjoint $(R_I,C_J)$-path system, then by Theorem~\ref{gv} $\det_q(M_\mathcal{C}[I,J])$ is the empty sum and so $\det_q(M_\mathcal{C}[I,J])=0$.

Conversely, suppose that there exists at least one vertex-disjoint $(R_I,C_J)$-path system. If $\mathcal{P}$ is one, then $w(\mathcal{P})$ consists of a nonempty sum of elements of the algebra $\mathcal{B}$ where each summand is a product of $\mathcal{B}$-generators and their inverses. By arranging each such product so the generators appear from left to right in lexicographic order, it follows that we can uniquely write
\begin{eqnarray}
{\det}_q(M_\mathcal{C}[I,J]) & = & \sum_{\mathcal{P}} w(\mathcal{P}) \nonumber \\
 & = & \sum_{Q\subseteq [m]\times [n]} P_Q(q)\prod_{\alpha\in Q} t_\alpha^{r(\alpha,Q)}, \label{equ}
\end{eqnarray} where $P_Q(q)$ is some polynomial in $\mathbb{Z}_{\geq 0}[q,q^{-1}]$, and $r(\alpha,Q)$ is an integer. 

Since at least one vertex-disjoint $(R_I,C_J)$-path system exists, the sum in Equation~\ref{equ} is non-empty and so there exists at least one $Q\subseteq [m]\times [n]$ such that $P_Q\not\equiv 0$. Since $q$ is transcendental over $\mathbb{Q}$, we know that $P_Q(q)\neq 0$ for any $P_Q\not\equiv 0$. Thus $\det_q(M_\mathcal{C}[I,J])\neq 0$.

\end{proof}

In the above proof we use the assumption that $q$ is transcendental over $\mathbb{Q}$. We believe that the result should remain true under the weakened hypothesis that $q\in \mathbb{K}^*$ is a non-root of unity. We note, however, that the main results of Launois~\cite{launois2} depend on a result of Hodges and Levasseur~\cite{hl} that requires $q$ to be transcendental over $\mathbb{Q}$. Thus we have made no attempt to prove Theorem~\ref{maincorollary} in the case $q$ is a non-root of unity.

\section{Finding Vanishing Quantum Minors}

Launois \cite{launois3} originally proved the following result for $\mathbb{K}=\mathbb{C}$, but by results of Goodearl, Launois and Lenagan~\cite{gll} it suffices to set $\mathbb{K}$ to be any field of characteristic zero. 

\begin{launois1} \label{l1}
Let $q\in \mathbb{K}^*$ be transcendental over $\mathbb{Q}$. The $\mathcal{H}$-invariant prime ideals of $\mathcal{A}$ are generated by quantum minors of the matrix of canonical generators $X_\mathcal{A}$.
\end{launois1}

Launois \cite{launois2} has given an algorithm which takes as input the Cauchon diagram corresponding to an $\mathcal{H}$-invariant prime ideal $\mathcal{I}$, and outputs a matrix whose vanishing quantum minors correspond to quantum minors of $X_\mathcal{A}$ which generate $\mathcal{I}$. This algorithm is essentially Cauchon's Deleting-Derivations Algorithm run in reverse.

\begin{algorithm}\label{algorithm}
(Note that the entries of every matrix below are from the algebra $\mathcal{B}$ from Definition~\ref{talg}).
\begin{description}
\item[Input] A Cauchon diagram $\mathcal{C}$.
\item[Output] A matrix $T^{(m,n)}$ with entries from the algebra $\mathcal{B}$.
	\item[Initialization] Let $T^{(1,1)}$ be an $m\times n$ matrix defined by 
	
	\begin{displaymath}
T^{(1,1)}[i,j] = \left\{ \begin{array}{ll}
t_{i,j} & \textrm{if $(i,j)\in W_\mathcal{C}$,}\\
0 & \textrm{if $(i,j)\in B_\mathcal{C}$.}
\end{array} \right.
\end{displaymath} 

Set $(s,t)=(1,2)$ and let $T^{(s,t)^-}[i,j]:=t^{(s,t)^-}_{i,j}$.
\item[While $(s,t)\neq (m,n+1)$,] do the following:

\begin{enumerate}
\item Construct the matrix $T^{(s,t)}$, where $T^{(s,t)}[i,j]:=t^{(s,t)}_{i,j}$, by

	\begin{displaymath}
t^{(s,t)}_{i,j} = \left\{ \begin{array}{ll}
t^{(s,t)^-}_{i,j} + t^{(s,t)^-}_{i,s}(t_{s,t}^{(s,t)^-})^{-1}\ t_{r,j}^{(s,t)^-}& \textrm{if $(i,j)\leq (s-1,t-1)$ and $t_{s,t} \neq 0$,}\\
t_{i,j}^{(s,t)^-} & \textrm{otherwise.}
\end{array} \right.
\end{displaymath} 
\item Set $(s,t)=(s,t)^+$.
\end{enumerate}
\item[End while.]
\end{description}
\end{algorithm}

Notice that we have $t^{(s,t)}_{s,k}=t^{(1,1)}_{s,k}$ for all $k\in[n]$. Launois \cite{launois3} proved the following.
\begin{launois2} \label{launois2}
Let $\mathcal{I}$ be an $\mathcal{H}$-invariant prime ideal of $\mathcal{A}=\mathcal{O}_q(\mathcal{M}_{m,n}(\mathbb{K}))$. Let $\mathcal{C}$ be the Cauchon diagram associated to $\mathcal{I}$. Apply Algorithm~\ref{algorithm} to $\mathcal{C}$ to obtain the matrix $T^{(m,n)}$. If a square submatrix in $T^{(m,n)}$ has a vanishing quantum minor, then the corresponding quantum minor in the matrix $X_\mathcal{A}$ is a generator for $\mathcal{I}$. Furthermore, $\mathcal{I}$ is generated by all such quantum minors. 
\end{launois2}

On the other hand, we prove the following.
\begin{pathmatrix} \label{pathmatrix}
For a Cauchon diagram $\mathcal{C}$, the path matrix $M_\mathcal{C}$ is the same as the matrix obtained at the end of Algorithm~\ref{algorithm}.
\end{pathmatrix}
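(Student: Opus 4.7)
The plan is to prove, by induction on $(s,t)$ in the lexicographic order, the stronger statement that $T^{(s,t)}[i,j]$ equals the sum of weights $w(P)$ over paths $P\colon r_i\Rightarrow c_j$ in $\mathcal{G}_\mathcal{C}$ all of whose \emph{special turns} --- white squares at which $P$ enters via a vertical edge and exits via a horizontal edge --- satisfy $(s',t')\preceq(s,t)$. Because every special turn lies in $[m]\times[n]$ and $(m,n)$ is maximal, the last step then yields $T^{(m,n)}[i,j]=M_\mathcal{C}[i,j]$.

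The base case $T^{(1,1)}$ holds because no special turn can occur at $(1,1)$ (there is no column $0$); the paths counted are precisely the ``straight'' paths travelling horizontally in row $i$ and then vertically in column $j$, which exist iff $(i,j)\in W_\mathcal{C}$ and in that case have weight $t_{i,j}$ by Proposition~\ref{cgproperties}(3). In the inductive step, when $(s,t)\in B_\mathcal{C}$ no path can have a special turn at $(s,t)$ and the algorithmic update is also trivial; when $(i,j)\not\leq(s-1,t-1)$, a short case analysis shows no path from $r_i$ to $c_j$ can enter $(s,t)$ vertically (if $i\geq s$) or exit $(s,t)$ horizontally (if $j\geq t$), so both sides remain unchanged.

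The interesting case is $(s,t)\in W_\mathcal{C}$ with $(i,j)\leq(s-1,t-1)$, where the update reads
$$T^{(s,t)}[i,j]=T^{(s,t)^-}[i,j]+T^{(s,t)^-}[i,t]\cdot t_{s,t}^{-1}\cdot T^{(s,t)^-}[s,j].$$
By the induction hypothesis, it suffices to exhibit a weight-preserving bijection between paths $P\colon r_i\Rightarrow c_j$ having a special turn at $(s,t)$ (and all other turns $\prec(s,t)$), and pairs $(P_1',P_2')$ of paths $P_1'\colon r_i\Rightarrow c_t$ and $P_2'\colon r_s\Rightarrow c_j$ each having all special turns $\prec(s,t)$. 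The map cuts $P$ at $(s,t)$: $P_1'$ is the initial segment of $P$ ending at $(s,t)$, extended by the vertical descent in column $t$ down to $c_t$; $P_2'$ is obtained by prepending the initial edge $r_s\to(s,a_s)$ followed by the horizontal run in row $s$ to $(s,t)$ in front of the tail of $P$ starting at $(s,t)$. Since the vertical descent has weight $1$ and the prepended portion telescopes to weight $t_{s,t}$, a direct calculation gives $w(P)=w(P_1')\cdot t_{s,t}^{-1}\cdot w(P_2')$; independence of the two choices then yields the required factored product.

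The main obstacle is surjectivity: every path counted in $T^{(s,t)^-}[i,t]$ must arise as $P_1'$, and similarly for $P_2'$. For $P_1'$, a path $Q\colon r_i\Rightarrow c_t$ with all special turns $\prec(s,t)$ cannot contain a horizontal edge in any row $\geq s$ --- one in a row $>s$ would produce a forbidden special turn at that row's entry point, while one in row $s$ would either create a special turn not strictly less than $(s,t)$ (if at column $\geq t$) or render $c_t$ unreachable (if at column $<t$). Hence the last horizontal run of $Q$ lies in some row $<s$ and terminates at column $t$, forcing the subsequent vertical descent to pass through $(s,t)$ in ``vertical-in, vertical-out'' mode; $Q$ is then exactly the $P_1'$ arising from its own truncation at $(s,t)$. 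The argument for $P_2'$ is parallel: $Q'\colon r_s\Rightarrow c_j$ starts in row $s$, so any horizontal edge in a row $>s$ would force a forbidden special turn at the row's entry point, confining all horizontal motion to row $s$; since $j<t$ and $(s,t)\in W_\mathcal{C}$, the row-$s$ run necessarily sweeps through $(s,t)$. Injectivity of the map is immediate, and the induction closes.
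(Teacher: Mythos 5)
Your proof is correct, but it is organized quite differently from the paper's. The paper fixes $n$ and inducts on the number of rows $m$: it first observes that row $m$ of $T^{(m,n)}$ is untouched by the algorithm and matches $M_\mathcal{C}$, invokes the inductive hypothesis on the diagram with row $m$ deleted to interpret $T^{(m-1,n)}[i,j]$ as the weight of paths avoiding horizontal edges in row $m$, and then runs an inner induction on the column $k$ to identify the \emph{increment} $t^{(m,k)}_{i,j}-t^{(m,k)^-}_{i,j}$ with the paths that descend into $(m,k)$ and then travel left along row $m$ to $c_j$. You instead run a single induction over the algorithm's steps in lexicographic order and prove a stronger, uniform invariant: $T^{(s,t)}[i,j]$ counts exactly the paths all of whose ``special turns'' (vertical-in, horizontal-out vertices) are $\preceq(s,t)$. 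The core combinatorial move is the same in both arguments --- cut a path at the turning vertex $(s,t)$, extend the head by a free vertical descent to $c_t$ and the tail by a telescoping horizontal run from $r_s$ of weight $t_{s,t}$, so that $w(P)=w(P_1')\,t_{s,t}^{-1}\,w(P_2')$ --- but your invariant gives a combinatorial meaning to \emph{every} intermediate matrix $T^{(s,t)}$, not just to the bottom-row increments, and avoids the two-level induction. The price is that you must verify surjectivity of the cut-and-paste map directly (your argument that a path to $c_t$ with all turns $\prec(s,t)$ has no horizontal edge in any row $\geq s$, hence must descend through $(s,t)$, is correct and is the step that in the paper is absorbed into the outer row induction). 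One cosmetic note: you have silently corrected the indices in the displayed update rule of Algorithm~\ref{algorithm} (the paper's $t^{(s,t)^-}_{i,s}$ and $t^{(s,t)^-}_{r,j}$ should indeed read $t^{(s,t)^-}_{i,t}$ and $t^{(s,t)^-}_{s,j}$, as the worked example confirms), which is the right reading.
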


Before proving this lemma in its full generality, we apply Algorithm~\ref{algorithm} to a small Cauchon diagram, and compare the result with the corresponding path matrix.

\begin{example}\label{ex}
\begin{figure} 
\centering
\includegraphics[height=6cm]{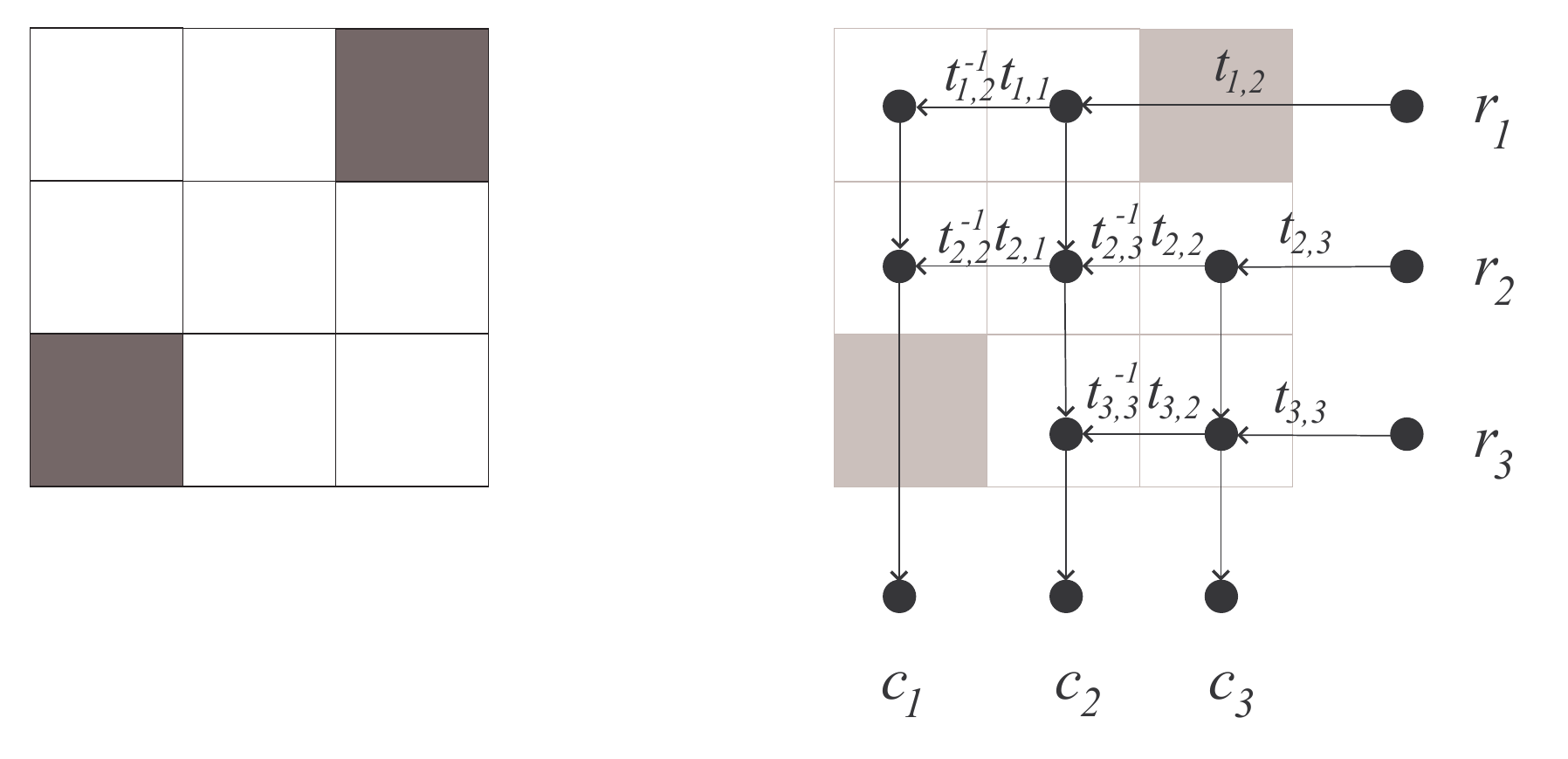}
\caption{Example~\ref{ex}: A Cauchon diagram on the left, and its Cauchon graph on the right (the unit weights on the vertical edges are not shown)}\label{ex2}
\end{figure}

Consider the $3\times 3$ Cauchon diagram $\mathcal{C}$ in Figure~\ref{ex2}. The initialization step of Launois' algorithm gives 

\begin{displaymath}
T^{(1,1)} =
\left[ \begin{array}{ccc}
t_{1,1} & t_{1,2} & 0 \\
t_{2,1} & t_{2,2} & t_{2,3} \\
0 & t_{3,2} & t_{3,3}
\end{array} \right].
\end{displaymath}

Notice that each non-zero entry $t_{i,j}$ is precisely the weight of the path $r_i\rightarrow (i,j)\rightarrow c_j$ in $\mathcal{G}_\mathcal{C}$.

Now at step $(s,t)$ of the algorithm, the only entries of $T^{(s,t)}$ that are modified from the previous step are those which are ``north-west'' of $(s,t)$. In particular, steps $(s,t)$ with either $s=1$ or $t=1$ do not change the previous matrix. In our example then, we have $T^{(1,1)}=T^{(1,2)}=T^{(1,3)}=T^{(2,1)}$. 

At step $(2,2)$, the only entry north-west of this entry is $(1,1)$. We therefore obtain

\begin{displaymath}
T^{(2,2)} =
\left[ \begin{array}{ccc}
t_{1,1}+ t_{1,2}t_{2,2}^{-1}t_{2,1} & t_{1,2} & 0 \\
t_{2,1} & t_{2,2} & t_{2,3} \\
0 & t_{3,2} & t_{3,3}
\end{array} \right].
\end{displaymath}
Notice that the new value of entry $(1,1)$ is precisely the weight of the path
path $r_1\rightarrow (1,1)\rightarrow c_1$ plus the weight of the path $r_1\rightarrow(1,2)\rightarrow(2,2)\rightarrow (2,1)\rightarrow c_1$.

The next step is $(2,3)$ and entry $(2,3)$ of $T^{(2,2)}$ is non-zero. However, the entries north-west of $(2,3)$ are $(1,1)$ and $(1,2)$, and since $T^{(2,2)}[1,3]=t^{(2,2)}_{1,3}=0$, the net effect of the algorithm at this step is to change nothing. Thus $T^{(2,3)}=T^{(2,2)}$. 

We also have $T^{(3,1)}=T^{(2,3)}$, so consider step $(3,2)$. By similar reasoning as in step $(2,3)$, we find $T^{(3,1)}=T^{(3,2)}$. The last step is $(3,3)$. Applying the algorithm we get

\begin{displaymath}
T^{(3,3)} =
\left[ \begin{array}{ccc}
t_{1,2}t_{2,2}^{-1}t_{2,1} & t_{1,2} & 0 \\
t_{2,1} & t_{2,2}+t_{2,3}t_{3,3}^{-1}t_{3,2} & t_{2,3} \\
0 & t_{3,2} & t_{3,3}
\end{array} \right].
\end{displaymath}

As one can easily verify, $T^{(3,3)}$ is precisely the path matrix $M_\mathcal{C}$. 

\end{example}

\begin{proof}[Proof of Lemma~\ref{pathmatrix}]

Fix $n$. We prove the lemma by induction on the number of rows $m$. As in Algorithm~\ref{algorithm}, we will denote $T^{(s,t)}[i,j]$ by $t^{(s,t)}_{i,j}\in \mathcal{B}$

First note that since we only modify entries which are north-west of the entry corresponding to the current step, the algorithm will always leave the $m$th row unmodified. That is, $T^{(s,t)}[m,[n]]=T^{(1,1)}[m,[n]]$ for all $(s,t)$. By the algorithm we get that, for $k\in [n]$,
	\begin{displaymath}
t^{(m,n)}_{m,k}=t^{(1,1)}_{m,k} = \left\{ \begin{array}{ll}
t_{m,k} & \textrm{if $(m,k)\in W_\mathcal{C}$,}\\
0 & \textrm{if $(m,k)\in B_\mathcal{C}$.}
\end{array} \right.
\end{displaymath}

Now in the $m$th row of $\mathcal{G}_\mathcal{C}$, there is clearly at most one possible path from $r_m$ to $c_k$. This path exists if and only if $(m,k)$ is a white square, and by Proposition~\ref{cgproperties}, this path has weight exactly $t_{m,k}$. 

From these two observations we see that the $m$th row in $T^{(m,n)}$ is exactly the same as the $m$th row in $M_\mathcal{C}$. Similarly, the $n$th column of $T^{(m,n)}$ is equal to the $n$th column of $M_\mathcal{C}$. In particular the lemma is true when $m=1$. 

Suppose that the lemma is true for all Cauchon diagrams with less than $m$ rows. If we obtain the Cauchon diagram $\mathcal{C}^\prime$ from $\mathcal{C}$ by deleting the $m$th row, then by induction we have $T^{(m-1,n)}\left[[m-1],[n]\right]=M_{\mathcal{C}^\prime}$. An equivalent way of stating this is that if $i<m$, then $t^{(m-1,n)}_{i,j}$ is the total of the weights of all paths in $\mathcal{G}_\mathcal{C}$ from $r_i$ to $c_j$ which \emph{do not} use a horizontal edge in row $m$.

As we already noted, $T^{(m,n)}[m,[n]]=M_\mathcal{C}[m,[n]]$ and $T^{(m,n)}[[m],n]=M_\mathcal{C}[[m],n]$ so to complete the proof, we establish the following claim by induction on $k\in[n]$ where $k$ will denote the $k$th column of $\mathcal{C}$. It will follow from this that $T^{(m,n)}=M_\mathcal{C}$. (At this point, the reader may wish to review the while loop in Algorithm~\ref{algorithm}).

\begin{claim}
If $(i,j)\leq(m-1,k-1)$, then $t^{(m,k)}_{i,j}$ is obtained from $t^{(m,k)^-}_{i,j}$ by adding the weights of all paths $P$ that satisfy the following properties: 
\begin{enumerate}
\item $P$ is a path from $r_i$ to $c_j$;
\item $P$ contains the subpath $K_j:(m,k)\Rightarrow c_j$. (Note that $K_j$ consists of horizontal edges from $(m,k)$ to $(m,j)$ and then the vertical edge $((m,j),c_j)$);
\item $P$ contains a vertical edge $((\ell,k),(m,k))$ for some $\ell<m$. In other words, if $k^\prime >k$, then vertex $(m,k^\prime)$ (if it exists) is \emph{not} an internal vertex of $P$.
\end{enumerate}
\end{claim}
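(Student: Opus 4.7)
The plan is to factor the algorithm's update at step $(m,k)$ into two pieces, each a sum of path-weights in $\mathcal{G}_\mathcal{C}$. Since the algorithm never modifies row~$m$, we have $t^{(m,k-1)}_{m,\ell}=t_{m,\ell}$ when $(m,\ell)$ is white and $0$ otherwise, and Algorithm~\ref{algorithm} then gives, for $(i,j)\le(m-1,k-1)$,
$$
t^{(m,k)}_{i,j}-t^{(m,k)^-}_{i,j}\;=\;t^{(m,k-1)}_{i,k}\,\cdot\,t_{m,k}^{-1}\,\cdot\,t^{(m,k-1)}_{m,j}
$$
when $t_{m,k}\neq 0$ (and $0$ otherwise). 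I plan to identify $t_{m,k}^{-1}\cdot t^{(m,k-1)}_{m,j}$ as $w(K_j)$ and $t^{(m,k-1)}_{i,k}$ as the sum of weights of paths $Q:r_i\Rightarrow(m,k)$ whose final edge is vertical; the product is then $\sum_Q w(Q\cdot K_j)$, which is exactly the weight sum described in the claim.

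First I would dispose of the degenerate cases. If $(m,k)$ is black then both sides vanish: the algorithm leaves entry $(i,j)$ unchanged, and $(m,k)\notin V(\mathcal{G}_\mathcal{C})$ precludes any path satisfying (2) or (3). If $(m,k)$ is white but $(m,j)$ is black then $t^{(m,k-1)}_{m,j}=0$, so the update vanishes; simultaneously $K_j$ cannot terminate at the non-vertex $(m,j)$. In the remaining case, both $(m,k)$ and $(m,j)$ are white, the horizontal walk in row~$m$ starting at $(m,k)$ passes through every white square to its left and so reaches $(m,j)$, and Proposition~\ref{cgproperties}(3) gives $w(K_j)=t_{m,k}^{-1}t_{m,j}$.

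The key step is the identification of $t^{(m,k-1)}_{i,k}$ with $\sum_Q w(Q)$. For $i<m$, entry $(i,k)$ is modified at step $(s,t)$ only when $(i,k)\le(s-1,t-1)$, i.e.\ when $t\ge k+1$; thus no step from $(m,1)$ through $(m,k-1)$ alters it, and $t^{(m,k-1)}_{i,k}=t^{(m-1,n)}_{i,k}$. By the outer inductive hypothesis on the number of rows, $t^{(m-1,n)}_{i,k}=M_{\mathcal{C}'}[i,k]$, where $\mathcal{C}'$ is $\mathcal{C}$ with row~$m$ deleted. A weight-preserving bijection identifies paths $r_i\Rightarrow c_k$ in $\mathcal{G}_{\mathcal{C}'}$ with paths $r_i\Rightarrow c_k$ in $\mathcal{G}_\mathcal{C}$ that use no horizontal row-$m$ edge: since $(m,k)$ is the bottom-most white square of column~$k$, every such $\mathcal{G}_\mathcal{C}$-path must pass through $(m,k)$ and, lacking a horizontal row-$m$ edge, enter it vertically, after which it takes the unit-weight edge to $c_k$; deleting that final edge yields exactly the paths $Q$.

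Combining gives the update as $\sum_Q w(Q)\,w(K_j)=\sum_Q w(Q\cdot K_j)$. Each $P=Q\cdot K_j$ satisfies (1)--(3); conversely any $P$ satisfying (1)--(3) decomposes uniquely this way, since by acyclicity and the ``left-and-down'' orientation of $\mathcal{G}_\mathcal{C}$, a path that enters $(m,k)$ vertically and then follows $K_j$ cannot have visited any $(m,k')$ with $k'>k$ (from such a vertex a path may only continue leftward in row~$m$ or exit directly to $c_{k'}$, neither of which permits a later vertical arrival at $(m,k)$). The hard part is this exhaustivity bookkeeping together with the clean reduction to the outer inductive hypothesis via $\mathcal{C}'$; the noncommutativity of $\mathcal{B}$ does not intervene here, as all three factors in the update are already arranged in an order consistent with the chosen path concatenation $Q\cdot K_j$.
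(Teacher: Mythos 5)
Your proposal is correct and follows essentially the same route as the paper's own argument: both dispose of the black-square cases, use the outer induction on rows (via the diagram $\mathcal{C}'$ with row $m$ deleted, together with the observation that entry $(i,k)$ is untouched at steps $(m,1),\dots,(m,k-1)$) to identify $t^{(m,k-1)}_{i,k}$ with the weight sum of the prefixes entering $(m,k)$ vertically, and then multiply by $w(K_j)=t_{m,k}^{-1}t_{m,j}$ to match the algorithm's update term. The extra detail you supply on the weight-preserving bijection with $\mathcal{G}_{\mathcal{C}'}$ and on why no $(m,k')$ with $k'>k$ can precede a vertical arrival at $(m,k)$ is a welcome elaboration of points the paper states more tersely, but it is not a different method.
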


\begin{figure} 
\centering
\includegraphics[height=6cm]{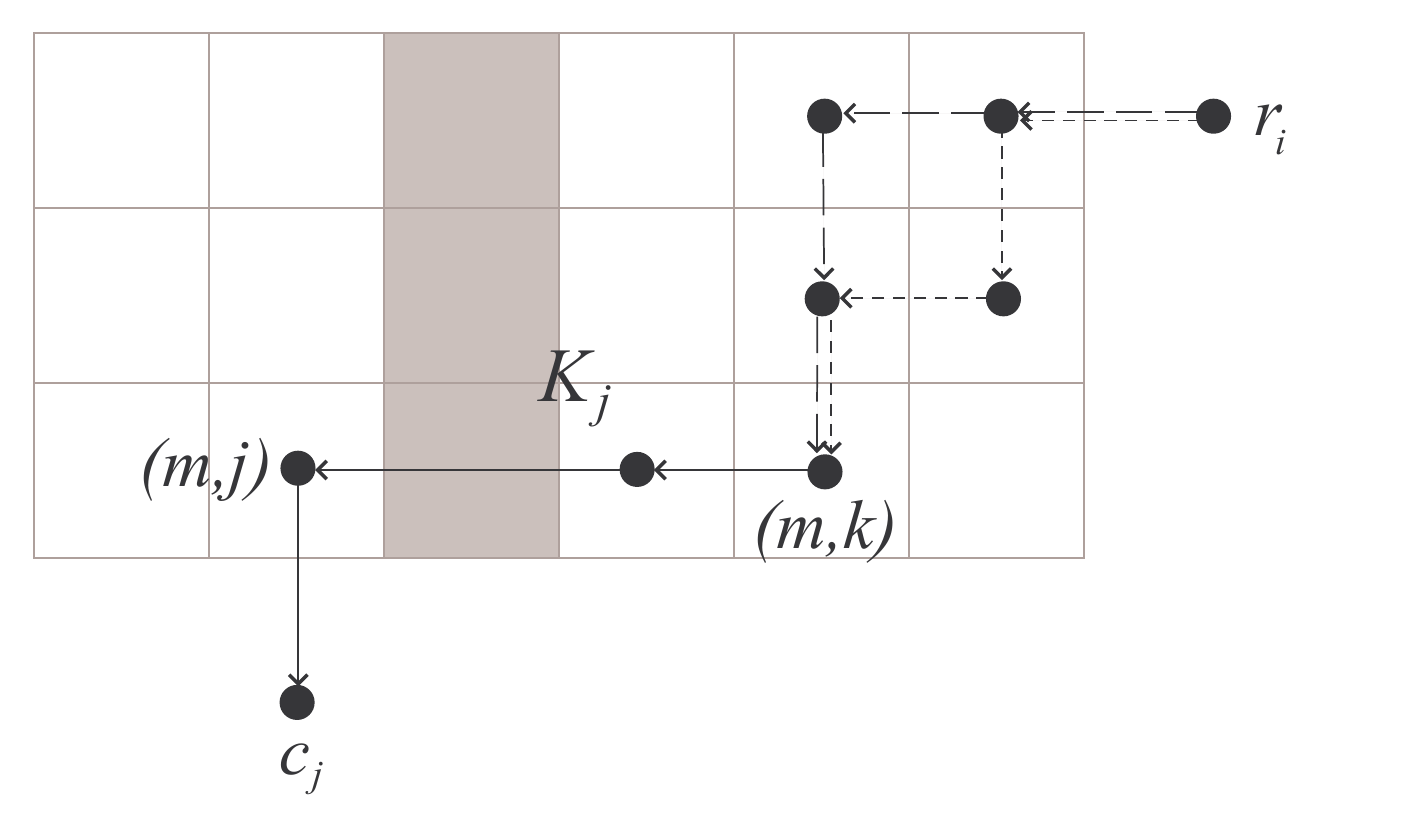}
\caption{$K_j$ is the path consisting of the edges drawn with a solid line. Concatenating $K_j$ with either of the paths drawn with dotted edges, gives a path which satisfies the properties in the claim.}\label{maintheoremfigure}
\end{figure}

For $k=1$, we know that $T^{(m,1)}=T^{(m-1,n)}$. On the other hand, since there are no $j<k$ the claim is trivially true. So let $k>1$ and assume that the claim is true for step $(m,k-1)$. 

If $(m,k)$ is black in $\mathcal{C}$, then according to Algorithm~\ref{algorithm}, we set $t^{(m,k)}_{i,j} =t^{(m,k-1)}_{i,j}$. On the other hand, if $(m,k)$ is black, then $K_j$ can not exist for any $j$ and so there are no paths which satisfy the properties in the claim. This then proves the claim in the case that $(m,k)$ is black.

Suppose that $(m,k)$ is white. If $(m,j)$ is black for some $j<k$, then $t^{(m,k-1)}_{m,j}=t^{(1,1)}_{m,j}=0$ and so by the algorithm we again have $t^{(m,k)}_{i,j} =t^{(m,k-1)}_{i,j}$. On the other hand, if $(m,j)$ is black, then $K_j$ can not exist. This proves the statement in the claim for those $j<k$ such that $(m,j)$ is black.

Finally, if $(m,k)$ and $(m,j)$ are white squares, then $K_j$ exists in $\mathcal{G}_\mathcal{C}$. Note that $w(K_j)= t_{m,k}^{-1}t_{m,j}$ by Proposition~\ref{cgproperties}. On the other hand, if $P$ is a path satisfying the properties in the claim, then there is a path $P^\prime:r_i\Rightarrow (m,k)$ such that $P=P^\prime K_j$. By Property 3, the last edge in $P^\prime$ is vertical. So if we concatenate $P^\prime$ with the vertical path $L_k:(m,k)\Rightarrow c_k$, then we get a path (with the same weight as $P^\prime$) from $r_i$ to $c_k$ which does not use any horizontal edge in the last row. 

By induction, the set of all such $P^\prime$ has total weight $t^{(m-1,k)}_{i,k}$. But this entry has not been modified at step $(m,\ell)$ of the algorithm for any $\ell<k$, so in fact, the set of all such $P^\prime$ has total weight $t^{(m,k-1)}_{i,k}$. Hence the total weight of all $P$ that satisfy the properties in the claim is exactly $$t^{(m,k-1)}_{i,k}w(K_j) = t^{(m,k-1)}_{i,k}t_{m,k}^{-1}t_{m,j}.$$ On the other hand, by Algorithm~\ref{algorithm}, $$t^{(m,k)}_{i,j}=t^{(m,k-1)}_{i,j}+ t^{(m,k-1)}_{i,k}t_{m,k}^{-1}t_{m,j}.$$ This finishes the proof of the claim and the lemma.

\end{proof}

Now we state the main result of this paper, which follows immediately from Theorem~\ref{maincorollary}, Theorem~\ref{launois2} and Lemma~\ref{pathmatrix}. 

\begin{maintheorem} \label{main}
Let $\mathcal{C}$ be an $m\times n$ Cauchon diagram corresponding to the $\mathcal{H}$-invariant prime ideal $\mathcal{I}$. A quantum minor ${\det}_q (X_\mathcal{A}[I,J])$ of $X_\mathcal{A}$ is in $\mathcal{I}$ if and only if there does not exist a vertex-disjoint $(R_I,C_J)$-path system in the Cauchon graph $\mathcal{G}_\mathcal{C}$.
\end{maintheorem}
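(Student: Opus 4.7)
The plan is to observe that the theorem is a direct chain of equivalences linking three previously established results, so the proof is essentially a matter of composing them correctly. I would proceed as follows.

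First, I would apply Theorem~\ref{launois2} to translate the algebraic condition ``${\det}_q(X_\mathcal{A}[I,J]) \in \mathcal{I}$'' into the combinatorial-algebraic condition ``${\det}_q(T^{(m,n)}[I,J]) = 0$'', where $T^{(m,n)}$ is the matrix produced by Algorithm~\ref{algorithm} applied to the Cauchon diagram $\mathcal{C}$ associated to $\mathcal{I}$. Care is needed here because Theorem~\ref{launois2} only says that every vanishing quantum minor of $T^{(m,n)}$ yields a generator of $\mathcal{I}$ and that these generate $\mathcal{I}$; but since $\mathcal{I}$ is a \emph{completely} prime ideal (a fact recorded right after Definition~\ref{quantumalgebra}), and since the quantum minors of $X_\mathcal{A}$ together with the associated $T^{(m,n)}$-minors are in bijective correspondence under Cauchon's embedding $\psi$, the equivalence goes both ways: a quantum minor of $X_\mathcal{A}$ lies in $\mathcal{I}$ precisely when the corresponding quantum minor of $T^{(m,n)}$ vanishes.

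Next, I would apply Lemma~\ref{pathmatrix} to replace $T^{(m,n)}$ by the path matrix $M_\mathcal{C}$. This is a literal equality of matrices, so it immediately gives
\[
{\det}_q\bigl(T^{(m,n)}[I,J]\bigr) \; = \; {\det}_q\bigl(M_\mathcal{C}[I,J]\bigr),
\]
and thus the vanishing of one is equivalent to the vanishing of the other.

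Finally, I would invoke Theorem~\ref{maincorollary}, which states precisely that ${\det}_q(M_\mathcal{C}[I,J]) = 0$ if and only if there is no vertex-disjoint $(R_I, C_J)$-path system in $\mathcal{G}_\mathcal{C}$. Composing the three equivalences yields the theorem.

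Since the real content has been packaged into the earlier results, there is no genuine obstacle in this proof; the only subtlety worth stressing is the use of complete primeness (together with Launois's theorem) to get both directions of the first equivalence, and the implicit use of the transcendence hypothesis on $q$ (which is inherited from both Theorem~\ref{maincorollary} and Theorem~\ref{launois2}) so that the chain of equivalences is valid over the base field $\mathbb{K}$.
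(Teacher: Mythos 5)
Your proof is correct and follows essentially the same route as the paper, which simply composes Theorem~\ref{launois2}, Lemma~\ref{pathmatrix} and Theorem~\ref{maincorollary}. Your extra remark about needing the converse direction of Theorem~\ref{launois2} is a reasonable point of care, though the paper treats the chain of equivalences as immediate.
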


We should emphasize that, in general, the preceding theorem does not give a \emph{minimal} generating set; further work must be done to find one. It is unclear if Yakimov's method~\cite{yakimov} produces a minimal generating set. 

\section{Concluding Remarks}

We note that Algorithm~\ref{algorithm} can, in general, result in a matrix which has entries with exponentially many terms. This algorithm is therefore not always ideal if one simply wishes to check whether a specific quantum minor appears in the generating set given by Theorem~\ref{l1}. On the other hand, finding a vertex-disjoint path system in a Cauchon graph is computationally efficient as it is a special case of Menger's Theorem in graph theory, which is well-known to be solvable in polynomial time~\cite{menger}.

\section*{Acknowledgments}
The author would like to thank Jason Bell, St\'ephane Launois and, most especially, the anonymous referee for valuable suggestions which improved the presentation of this paper. 

\bibliography{casteels1}
\bibliographystyle{amsplain}
\end{document}